\begin{document}

\newtheorem{thm}{Theorem}[section]
\newtheorem{lem}[thm]{Lemma}
\newtheorem{col}[thm]{Corollary}
\newtheorem{df}[thm]{Definition}
\newtheorem{exam}[thm]{Example}
\newtheorem{rem}[thm]{Remark}
\newtheorem{notation}[thm]{Notation}
\newtheorem{claim}{Claim.}
\renewcommand{\theclaim}{}

\newcommand{\N}{\mathbf N}
\newcommand{\nast}{\mathcal N^{\ast}}
\newcommand{\fps}{\mathbf{FPS}}
\newcommand{\f}{\mathbf f}
\newcommand{\z}{\mathrm Z}
\newcommand{\spc}{\mathrm{Sp}}
\newcommand{\minus}{\dot{\text{--}}}
\newcommand{\mprod}{\times^{\minus}}

\newcommand{\dgl}{\underline{\mathrm{dgl}}}
\newcommand{\half}{\underline{\mathrm{half}}}
\newcommand{\pair}{\underline{\mathrm{pair}}}
\newcommand{\fst}{\underline{\mathrm{fst}}}
\newcommand{\snd}{\underline{\mathrm{snd}}}

\newcommand{\sub}{\mathrm{SUB}}
\newcommand{\snrn}{\mathrm{SNRN}}
\newcommand{\RN}{\rightarrow_{R_{\N}}}
\newcommand{\RS}{\rightarrow_{in}}
\newcommand{\Sp}{\mathrm{Sp}}
\newcommand{\lh}{\mathrm{lh}}
\newcommand{\ul}{\underline}

\newenvironment{proof}{\noindent{\bf Proof.}}{\hfill $ \square $}

\newcounter{enumeroman}
\def\theenumeroman{(\roman{enumeroman})}
\newenvironment{enumeroman}
{
  \begin{list}{(\roman{enumeroman})}{\usecounter{enumeroman}}
}
{
  \end{list}
}

\renewcommand{\theequation}{\fnsymbol{equation}}

\title{A term-rewriting characterization of PSPACE}

\author{Naohi Eguchi \\ \\
        Graduate School of Engineering \\
        Kobe University}

\date{January, 2010}

\maketitle

\begin{abstract}
Isabel Oitavem has introduced a term rewriting system (TRS) which captures the
 class \textbf{FPS} of polynomial-space computable functions.
We propose an alternative TRS for \textbf{FPS}.
As a consequence, it is obtained that \textbf{FPS} is the smallest class
 containing certain initial functions and closed under specific operations.
It turns out that our characterization is relatively simple and suggests
an uniform approach to the space-complexity.  
\end{abstract}

\section*{Introduction}

Term rewriting is known as an abstract model of computation, 
since a term is rewritten by successively replacing subterms by equal
terms until no further reduction is possible.
It is also known that term rewriting forms a Turing complete model of
computation.
This paper is an application of term rewriting to small complexity
classes which only involve feasibly computable functions.

The immediate motivation has come from a work by I. Oitavem.
In \cite{oitavem02}, a term rewriting
characterization of the class $\fps$ of functions computed in
polynomial space was given.
This is based on a recursion-theoretic characterization of 
$\fps$ which was obtained by herself. 
In \cite{oitavem97} Oitavem reformulates $\fps$ according to a principle
which was initiated by S. Bellantoni and S. Cook.
The Bellantoni-Cook principle separates variables in every function by
semi-colon as follows:
$f ( \vec x; \vec y)$ -- variables $\vec x$ occurring to the left of the
semi-colon are called normal, while variables $\vec y$ to the right are
called safe.
Roughly speaking, this principle allows recursion only for normal
positions (safe recursion), while composition only for safe positions
(safe composition).
In \cite{bel_cook} a recursion-theoretic characterization of
the polynomial-time computable functions is given
with the use of safe recursion on notation.

Oitavem has shown that it is possible to characterize classes of
computational complexity involving space constraints by techniques from
the field of term rewriting.
Oitavem's function class, however, contains a initial function with large
growth rate like the product, and two recursion schemes.

In this paper we introduce a new term rewriting system for $\fps$.
As a consequence, an alternative characterization of $\fps$ is obtained.
This system is also based on the Bellantoni-Cook principle.
Nevertheless, our class need only elementary initial functions and one
recursion scheme which is called safe nested recursion.
Hence our characterization is relatively simple.
It also turns out that specific safe recursion schemes capture various
space-complexity classes in the presence of the same initial functions
(see Remark \ref{rem_uniform}).
In this sense, our formulation is uniform to the space complexity.

One problem is that translating the safe nested recursion scheme into
the rewriting rule results in handling terms of exponential size.
This is not admissible to capture $\fps$.
We solve this problem by employing the \emph{innermost strategy}. 
Furthermore, for the converse direction, it is not trivial whether the
poly-space computations can be simulated in the present formulation.
A main task is to define a pairing and unpairing functions.

The scheme of safe nested recursion has been introduced by T. Arai and the
author to characterize the 
exponential-time computable functions.
This paper is not fully self-contained, but owes observations on some
properties of safe nested recursion to \cite{arai_eguchi}.

In Section \ref{sec_snrn} we define a function class $\N$ via the
operation of safe nested recursion.
The main definition is given in Section \ref{sec_trs}.
Based on $\N$, we introduce a rewrite system $R_{\N}$ over a class 
$\mathcal F$ of function symbols.
It is shown that for any $f \in \mathcal F$, the length of every term
occurring in a rewriting sequence which starts with $f( \vec t)$ is bounded
by a polynomial in the lengths of input terms $\vec t$
if an innermost strategy is used.
As a corollary, we have $\N \subseteq \fps$.
In Section \ref{sec_pspace}, conversely, it is shown that every
polyspace computation is simulated in $\N$.

\section{A function class $ \N$ with safe nested recursion}
\label{sec_snrn}

\newcommand{\lex}{\prec}
\newcommand{\lexeq}{\preceq}

The first section is devoted to introduce a class $\N$ via the operation of safe nested
recursion on notation (SNRN).
The scheme of SNRN is introduced by Arai and the author
\cite{arai_eguchi}. 

\begin{notation} 
\label{convention}
\normalfont
Although the functions in $\N$ are defined over the natural numbers, 
numbers are denoted as binary strings.
For instance, $2x+i$ is denoted by $xi$ for each $i \in \{ 0, 1 \}$.
Similarly, $xy$ denotes the concatenation of numbers $x$ and $y$ in the
 binary representation.

Fix the signature
$\Sigma = \{ 0, 1, \z \}$ and put
$\Sigma^k := \{ \sigma_1 \cdots \sigma_k : 
                \sigma_1, \dots, \sigma_k \in \Sigma 
             \} \setminus \{ \z \cdots \z \}
$.
If $w \in \Sigma^k$ is of the form $\sigma_1 \cdots \sigma_k$, then 
$w(i)$ denotes $\sigma_i$ for every $i =1, \dots, k$.
Numerals are represented by the binary successor 
$C_0, C_1 \in \N $ such that $C_0 (x; ) = x0$ and $C_1 (x;) = x1$.
Extending this notation to 
$\Sigma = \{ 0, 1, \z \}$, we mean $C_\z (x;)$ for $0$, and 
$C_w ( \vec y;)$ abbreviates 
$(C_{w(1)} (y_1;), \dots, C_{w(k)} (y_k;))$.
For $ \vec y = (y_1, \dots, y_k)$, $w \in \Sigma^k$ and
$i \in \{ k+1, \dots, k+k \}$,
$C_{w(i)} (y_i;)$ denotes $y_{i-k}$ if 
$w(i-k) \in \{ 0, 1 \}$, otherwise $0$.

Let 
$ f ( \vec x, z) [g ( \vec y) / z] $
denote 
$ f ( \vec x, g ( \vec y))$,
the result of a substitution.
$ |x| $ denotes the length of the binary representation of a number $x$, 
i.e., 
$ |x| = \lceil \log_2 (x+1) \rceil $,
which is called the binary length of $x$.
Moreover, for 
$ \vec x = ( x_1 , \dots , x_k ) $, let
$ | \vec x | := ( |x_1| , \dots , |x_k| )$ and
$ \max \vec x := \max \{ x_i : i=1, \dots , k  \} $.
\end{notation}

The computation for every function defined by nested recursion runs
 along the lexicographic ordering.
The scheme of SNRN is defined via a lexicographic ordering $\lex$
and the $ \lex $-functions.

\begin{df}[$\lex$-predecessors]
\normalfont
Suppose $k \geq 1$ and $w \in \Sigma^k$.
For $\vec v = (v_1, \dots, v_k)$ and
$\vec y = (y_1, \dots, y_k)$,
we define $\vec v \lex^k \vec y$.
\begin{enumerate}
\item 
The case $k=1$ is defined by 
$v \lex^1 C_i (v;)$ for an $i \in \{ 0, 1 \}$.
We write $v \lexeq^1 y$ if $v \lex^1 y $ or $v=y$.
\item
For the case $k>1$,
$( v_1, \dots, v_k ) \lex^k (y_1, \dots, y_k)$ 
iff there exists $k_0 \in \{ 1, \dots, k \}$ such that 
$\forall i < k_0 (v_i = y_i)$,
$v_{k_0} \lex^1 y_{k_0}$ and
$\forall i > k_0 \exists j \in \{1, \dots, k\}
 (v_i \lexeq^1 y_j)
$.
\end{enumerate}
If $ \vec v \lex^k \vec y$, then $ \vec v$ is called a
      $\lex^k$-predecessor of $\vec y$.
\end{df}

The ordering $\lex$ is a natural restriction of the usual lexicographic
 ordering in the Bellantoni-Cook principle.
To see this, for now modify $\lex^2$ as
\begin{itemize}
\def\labelitemi{--}
\item $(x, y) \lex^2 (x+1, 0)$ if $y \in \{x, x+1, 0, 1 \}$, 
\item $(x+1, y) \lex^2 (x+1, y+1)$, and
\item $(x, v) \lex^2 (x+1, y+1)$ if $v \in \{x, x+1, y, y+1 \}$.
\end{itemize}
Let us consider Ackermann function here.
Ackermann function $A(x,y)$ is defined by nested recursion on 
$(x,y)$:
$A (0, y) = y+1$,
$A (x+1, 0) = A (x,1)$,
$A (x+1, y+1) = A (x, A (x+1, y))$.
In the equations,
$(x, 1) \lex^2 (x+1, 0)$ and 
$(x+1, y) \lex^2 (x+1, y+1)$, but
$(x, A (x+1, y)) \not\lex^2 (x+1, y+1)$
due to the presence of $A(x+1, y)$.
Recall that the Bellantoni-Cook principle forbids substituting the
 recursion terms into the recursion parameters.
These observations give rise to the definition of a weaker lexicographic
 ordering 
$\lex$.

To determine an operation of SNRN, the $\lex$-functions are introduced.
Recall that $C_Z (y;)$ denotes $0$.
Every $\lex^k$-function $\f$ indicates which $\lex^k$-predecessor of
$C_w ( \vec y;) = ( C_{w(1)} ( y_1;), \dots, C_{w(k)} ( y_k;))$
should be chosen for each $w \in \Sigma^k$.

\begin{df}[$\lex$-functions]
\normalfont
Suppose that $\f$ is a finite functions such that
$\f : \{1, \dots, k \} \times \Sigma^k \rightarrow \{1, \dots, 2k\}$.
For $w \in \Sigma^k$, let
$C_{w( \f (w))} ( \vec y_{\f (w)};)$ abbreviate 
$(C_{w( \f (1, w))} (y_{\f (1, w))};), \dots, 
                                 C_{w( \f (k, w))} (y_{\f (k, w))};))$.

Then $\f$ is called a $\lex^k$-function, if 
$C_{w( \f (w))} ( \vec y_{\f (w)};) \lex^k C_w ( \vec y; )$
for all $w \in \Sigma^k$
(not depending on choice of $\vec y = (y_1, \dots, y_k)$).
\end{df} 
Let us recall a convention in Notation \ref{convention} that for 
$k_0 \in \{1, \dots, k \}$ and $w \in \Sigma^k$,
$C_{ w( k+k_0 ) } ( y_{ k+k_0 } ; )$ denotes $y_{k_0}$ if 
$w( k_0 ) \in \{ 0, 1 \}$.
By the definition, any $\lex^k$-function $\f$ satisfies the condition:
$\forall w \in \Sigma^k$, 
$\exists k_0 \in \{1, \dots, k\}$
s.t. $w(k_0) \in \{ 0, 1 \}$ and
\begin{equation*}
\label{cases_prec}
 \begin{cases}
 \f (i, w) = i  
 & \text{if} \ i < k_0, \\
 \f (k_0, w) = k+k_0,
 & \text{}  \\
 \f (i, w) \in \{ 1, \dots, 2k\}
 & \text{if} \ i > k_0.
 \end{cases}
\end{equation*}
For examples of $\lex^k$-functions, see \cite{arai_eguchi}.

Now we introduce the class $\N$, which are defined over the natural numbers, contrary to
 the characterization of words in Oitavem \cite{oitavem97}.
Let $a+1$ stand for the numeric successor of $a$, whereas the notation
 $n'$ is used in \cite{oitavem97}.
The notation $a \dot- 1$ is used to denote the corresponding predecessor
 of $a$.

\begin{df}[The class $\N$] 
\label{df_N}
\normalfont
A class $\N^{k, l}$ of functions with $k$ normal and $l$ safe arguments 
is defined by the following initial functions and operations:  
\begin{description}
\item[Zero]
$O^{k, l} \in \N^{k, l}$; \quad
$O^{k, l} (x_1, \dots, x_k; a_1, \dots, a_l) = 0$
\item[Projections]
$I^{k, l}_j \in \N^{k, l}$ $(1 \leq j \leq k+l)$;
  \begin{equation*}
  I^{k, l}_j (x_1, \dots, x_k; a_1, \dots, a_l) =
  \begin{cases}
    x_j
  & \text{if $1 \leq j \leq k$,} \\
    a_{j-k}
  & \text{if $k < j \leq k+l$.}
  \end{cases}
  \end{equation*}
\item[Successor]
$S \in \N^{0, 1}$; \quad
$S (; a) = a+1$
\item[Predecessor]
$P \in \N^{0, 1}$; \quad
$P (; a) = a \dot- 1$
\item[Cases]
$C \in \N^{0, 3}$; \quad
$C (; a, b, c) = 
 \left\{\begin{array}{ll}
         b & \text{if $a=0$} \\
         c & \text{if $a>0$}
        \end{array}
 \right.
$
\item[i-Concatenation]
$C_i \in \N^{1, 0}\ (i=0, 1)$; \quad
$C_i (x; ) = 2x +i =xi$

\item[Deletion]
$D \in \N^{0, 1}$; \quad
$D (; a) = \lfloor a / 2 \rfloor$
\item[Safe composition]
If $h \in \N^{k', l'}$,
$g_1, \dots, g_{k'} \in \N^{k, 0}$ and
$\varphi_1, \dots, \varphi_{l'} \in \N^{k, l}$, then
$f \in \N^{k, l}$ is defined by
$  f ( \vec x; \vec a) =
  h ( \vec g ( \vec x;) ; \vec \varphi ( \vec x; \vec a))
$.

\item[Safe nested recursion on notation (SNRN)]
Suppose that 
$ g \in \mathcal N^{k', l+1} $, and
$ h_w$, 
$ \varphi_w \in \mathcal N^{k+k', l+2} $
for each $w \in \Sigma^k$.
Also suppose that 
$\f_1, \f_2 $
are $\lex^k$-functions.
Then $f \in \mathcal N^{k+k', l+1}$ is defined by 
\begin{equation*}
\left\{
\begin{array}{rcl}
f ( \vec 0, \vec x; \vec a, b) &=& g ( \vec x; \vec a, b), \\
f ( C_w ( \vec y;), \vec x; \vec a, b) &=&
  h_w ( \vec v_1, \vec x; \vec a, b, 
        f ( \vec v_1, \vec x; \vec a, c)) \\ 
&&
[ \varphi_w ( \vec v_2, \vec x; \vec a, b, 
   f ( \vec v_2, \vec x; \vec a, b)) / c] \quad (w \in \Sigma^k), 
\end{array}
\right.
\label{SNRN}
\end{equation*}
where, for every $i=1, 2$, $ \vec v_i$ abbreviates 
$C_{w( \f_i (w))} ( \vec y_{\f_i (w)};)$, 
and hence $ \vec v_i \lex^k C_w ( \vec y; )$.
\end{description}
Then we define
$\N := \bigcup_{k, l \in \mathbb N} \N^{k, l}$
and $\N_{normal} := \bigcup_{k \in \mathbb N} \N^{k, 0}$.
\end{df}

\begin{rem}
\label{rem_snrn}
\normalfont
In the above scheme of SNRN, the restriction of the nesting is not
 crucial.
Even if we allow any constant number of nestings, arguments in
 Section \ref{sec_trs} work.
Further, Arai and the author \cite{arai_eguchi} employ a more general scheme of the form 
\begin{equation}
 \begin{cases}
 f( \vec 0, \vec x; \vec a) = g( \vec x; \vec a), \\
 f( C_w ( \vec y), \vec x; \vec a) =
 h_w ( \vec v_1, \vec x; \vec a,
      f( \vec v_1, \vec x; 
        \vec \varphi_w ( \vec v_2, \vec x; \vec a,
                        f( \vec v_2, \vec x; \vec a)
                       )
       )
     )
 \end{cases}
\label{general_snrn}
\end{equation}
for $ \vec \varphi_w = \varphi_{w, 1}$, $\dots$, $\varphi_{w, l}$ and an $l>0$.
If the projection function $I^{k+k', l}_{k+k'+i}$ is taken as $\varphi_{w, i}$ for each 
$i =1, \dots, l-1$, then this scheme is just the SNRN scheme.
If the scheme (\ref{general_snrn}) is contained instead of the one in
 Definition \ref{df_N},
Theorem \ref{thm_FsubsetFPS} does not hold.
Nevertheless, this restriction is not essential either.
The same class will be generated even by
 (\ref{general_snrn}).
See also Remark \ref{rem_head_reduction}.
\end{rem}

\begin{exam} 
\label{exam_snrn}
\normalfont \
\begin{enumerate}
\item $\text{\d{--}} (x; a) = 
       D^{2^{|x|}} (; a)$.
\begin{eqnarray*}
  \text{\d{--}} (0; a) &=& D (; a), \\
  \text{\d{--}} (xi; a) &=& \text{\d{--}} (x; \text{\d{--}} (x; a)).
\end{eqnarray*}
\label{-}
\item $ + (x; a) = x +a$.
  \begin{eqnarray*}
  +(0; a) &=& a, \\
  +(x0; a) &=& 2x +a 
           = +(x; +(x; a)), \\
  +(x1; a) &=& 2x +1 +a 
           = S(; +(x; +(x; a))).
  \end{eqnarray*}
Similarly, 
$ \minus (x; a) = a \minus x$ is defined.
\label{+}
\item $ \times (x, y; a) = y \cdot 2^{|x|} +a$.
  \begin{eqnarray*}
  \times (0, y; a) &=& y + a, \\
  \times (xi, y; a) &=& \times (x, y; \times (x, y; a)).
  \end{eqnarray*}
\label{times}
\item $f(x; a) = 2^{|x|} +a$.
  \begin{eqnarray*}
  f(0; a) &=& S(;a), \\
  f(xi; a) &=& 2^{|x| +1} +a \\
           &=& f(x; f(x; a)). \quad (i=0, 1)
  \end{eqnarray*}
\label{exp}
\item $f (x, y, z; a) = 2^{|x| \cdot |y| +|z|} +a$.
  \begin{eqnarray*}
  f (0, 0, 0; a) &=& S(;a), \\
  f (x, y, zi; a) &=& 2^{|x| \cdot |y| +|z| +1} +a 
  = f(x, y, z; f(x, y, z; a)), \\
  f (x, yi, 0; a) &=& 2^{|x| (|y| +1)} +a 
  = f(x, y, x; a), \\
  f (xi, 0, 0; a) &=& f(x, 0, 0; a). \quad (i=0, 1)
  \end{eqnarray*}
Similarly, we can define 
$f(x, y, z, u, v, w; a) = 2^{|x| |y| |z| +  |u| |v| + |w|} + a$
and so on.
Hence, a suitable application of safe composition yields
$2^{p(| \vec x|)} \in \N_{normal}$
for any polynomial $p( \vec x)$.
\label{exp3}
\item $2^{2^{|x|}} \not\in \N$, contrary to $ f(x;a) = 2^{2^{|x|}} \cdot
      a \in \mathcal N$ in \cite{arai_eguchi}.
This is due to the choice of initial functions.
The binary successors $S_0, S_1 \in \mathcal N^{0, 1}$ are defined so
      that
$S_i (; a) = 2a + i $ $(i=0, 1)$.
However $C_0, C_1 \in \N^{1, 0}$ are defined on the normal argument, and
      therefore
$C_i$ cannot be $g$ in the scheme of SNRN in Definition \ref{df_N}.
\end{enumerate}
\end{exam}

Recall that $\fps$ is the class of functions computed by a deterministic
Turing machine with the use of a number of cells bounded by $p (|\vec
x|)$ for some polynomial $p ( \vec x)$. 
The following theorem is a direct consequence of results in Section
\ref{sec_trs} and \ref{sec_pspace}. 

\begin{thm}
\label{mainthm}
$\N_{normal} = \fps$.
\end{thm}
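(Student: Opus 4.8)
The plan is to prove the equality $\N_{normal} = \fps$ by establishing the two inclusions separately, exactly as the introduction signals via the references to Sections \ref{sec_trs} and \ref{sec_pspace}.

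For the inclusion $\N_{normal} \subseteq \fps$, the strategy is to pass through the rewrite system $R_{\N}$. First I would translate each function of $\N$ into a function symbol in $\mathcal F$ together with its defining rewrite rules, so that the equations for the initial functions, safe composition, and SNRN become rewrite rules. The crucial point, already flagged in the introduction and in Remark \ref{rem_snrn}, is that naively unfolding the SNRN scheme duplicates the recursion term (the subterm $f$ appears in both $\vec v_1$ and $\vec v_2$ positions, and is further substituted via $[\varphi_w(\dots)/c]$), which produces terms of exponential size. The key step is therefore to adopt the \emph{innermost} reduction strategy $\RS$ and prove, by induction on the structure of $f \in \mathcal F$ following the operations in Definition \ref{df_N}, that the length of every term appearing in an innermost rewriting sequence starting from $f(\vec t)$ is bounded by a polynomial in $|\vec t\,|$. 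Once this polynomial length bound is secured, a standard argument shows that the whole innermost computation can be carried out by a deterministic Turing machine in polynomial space (one stores a current term of polynomial length and repeatedly rewrites it), giving $\N \subseteq \fps$ and in particular $\N_{normal} \subseteq \fps$ on the normal-argument functions.

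For the converse $\fps \subseteq \N_{normal}$, the plan is to simulate an arbitrary polynomial-space-bounded deterministic Turing machine inside $\N$. A configuration of such a machine on input $\vec x$ fits in $p(|\vec x|)$ cells for some polynomial $p$, and by Example \ref{exam_snrn}(\ref{exp3}) we already have $2^{p(|\vec x|)} \in \N_{normal}$, which is what provides enough ``room'' to code configurations as numbers. I would encode a configuration as a single number (tape contents, head position, and state packed together via binary concatenation) and define the one-step transition function of the machine in $\N$; since one step only inspects and updates a bounded local portion of the configuration, the transition function should be definable from the elementary initial functions $O, I^{k,l}_j, S, P, C, C_0, C_1, D$ together with safe composition and the coding/decoding machinery. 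The number of steps of a poly-space machine that halts is bounded by $2^{p(|\vec x|)}$, so iterating the transition function this many times must be realized by SNRN: the lexicographically descending recursion along a tuple of inputs of length $p(|\vec x|)$ drives exactly $2^{p(|\vec x|)}$-many iterations, and the output configuration is then decoded to recover the machine's output in $\N_{normal}$.

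The main obstacle I expect is on the $\fps \subseteq \N$ side, specifically the construction of the pairing and unpairing functions, which the introduction itself identifies as the principal task. Because composition is restricted by the Bellantoni–Cook discipline (only safe positions may receive recursion outputs, and recursion may be driven only by normal arguments), one cannot freely substitute large coded configurations into recursion parameters; the pairing scheme must respect this separation while still allowing a configuration number to be both read and updated along the recursion. Defining $\pair$, $\fst$, $\snd$ and verifying that they, together with the step function, fit into the SNRN template with appropriate $\lex^k$-functions $\f_1, \f_2$ is where the delicate bookkeeping lies; the length bound from the first half is comparatively mechanical once the innermost strategy is fixed. After both inclusions are in hand, the theorem $\N_{normal} = \fps$ follows immediately.
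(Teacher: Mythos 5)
Your first inclusion, $\N_{normal} \subseteq \fps$, follows the paper's route: translate $\N$ into a rewrite system over $\mathcal F$, impose the innermost strategy, and prove by induction on the build-up of $f \in \mathcal F$ that every term in an innermost reduction of $f(\vec m; \vec n)$ has length polynomial in $|\vec m|, |\vec n|$ (the paper does this as Theorem \ref{thm_FsubsetFPS}, resting on the normal-form length bound of Lemma \ref{lem_lh} and a side induction along the ordering $\prec$ via Lemma \ref{lem_pred}). The only divergence is that the paper's $R_{\N}$ is deliberately non-confluent, so Corollary \ref{thm_NsubsetFPS} produces a nondeterministic machine and appeals to Savitch's theorem, whereas your deterministic schematic TRS would avoid that step; this half of your proposal is sound.

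The gap is in the converse inclusion. You propose to pack a configuration of the poly-space machine into a single number, define the machine's one-step transition function in $\N$ by safe composition from the initial functions, and iterate it $2^{p(|\vec x|)}$ times by SNRN. But in any such iteration the coded configuration is a recursion output, hence occupies \emph{safe} argument positions, and the only operations $\N$ can apply there are $S$, $P$, $C$, $D$ and projections: the concatenations $C_0, C_1$ are initial functions on the \emph{normal} argument only (the paper stresses exactly this point in the final item of Example \ref{exam_snrn} --- it is the reason $2^{2^{|x|}} \not\in \N$), and no safe operation shifts a number by an amount depending on another argument. Consequently the step function cannot do random access into a packed configuration (the head position is polynomially large, so the portion to be inspected is not ``bounded local'' in any sense usable here), and it cannot even perform the doubling $a \mapsto 2a+i$ required by the standard split-tape representation (tape left of the head in one number, tape right of it in another). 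This is precisely why the paper inserts an intermediate model: following Handley and Wainer \cite{handley}, the poly-space Turing machine is first simulated by a register machine over \emph{unary} notation taking $2^{q(|\vec x|)}$ steps, whose basic instructions (zero, successor, predecessor, transfer, jump) \emph{are} safe operations of $\N$; doubling and halving of registers are themselves programmed as exponentially many unary steps. The exponential iteration count is then supplied by the simultaneous SNRN scheme (the functions $F_j(y, \vec x;)$ describe the machine at step $2^{|y|}$), and simultaneous SNRN is reduced to the official single SNRN by Lemma \ref{lem_simul}, which is where the pairing and unpairing functions (built from $\times$, iterated deletion, $\minus$, and $2^{p(|\vec x|)}$ of Example \ref{exam_snrn}.\ref{exp3}) enter. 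So you correctly flagged pairing as a task, but the more fundamental obstruction --- that a Turing-machine step is not a safe operation in $\N$, forcing the unary register-machine detour --- is missing from your plan, and without it the second inclusion does not go through.
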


\begin{rem} 
\label{rem_uniform}
\normalfont
Let us recall briefly the scheme of \textbf{safe recursion}:
\begin{equation*}
 \begin{cases}
 f(0, \vec x; \vec a) = g( \vec x; \vec a),
 & \text{} \\
 f(S(;y), \vec x; \vec a) = 
 h(y, \vec x; \vec a, f(y, \vec x; \vec a)).
 & \text{}
 \end{cases}
\end{equation*}
Via a work of Bellantoni (\cite{Bel92} Chapter 5), it turns out that if the SNRN
 scheme is replaced by the scheme of safe recursion (and even if both $C_i$
 and $D$ are omitted), the resulting class is 
 identical to the class of linear-space computable functions.
Moreover, replace SNRN by safe nested recursion (on unary
 notation), and restrict safe composition suitably, 
cf. \cite{handley} or \cite{arai_eguchi}.
Then the resulting class will be identical to the class of
 functions computable in $2^{O(|x|)}$-space, i.e., EXPSPACE-computable
 functions.
These observations together with Theorem \ref{mainthm} suggest an uniform
 approach to space-complexity classes.
\end{rem}

\renewcommand{\theequation}{\arabic{equation}}
\setcounter{equation}{0}

\section{Term rewriting system for PSPACE}
\label{sec_trs}

In this section we introduce a term rewriting system $R_{\N}$ over a
class $\mathcal F$ of function symbols corresponding to $\N$.
We show that for any $f \in \mathcal F$, the size of every term
occurring in a rewriting sequence which starts with $f( \vec m; \vec n)$ is bounded
by a polynomial in the binary lengths of numerals $\vec m$ and $\vec n$,
whenever an innermost strategy is used.
This implies that every $R_{\N}$-reduction (innermost) strategy yields an
algorithm for $\N$ running in polynomial space, i.e.,
$ \N_{normal} \subseteq \fps$.

Let $\mathcal V$ be a countably infinite set of variables.
Variables are written as $x, y, z$.
We use $\mathcal V ar (t)$ to denote the set of variables occurring in a
term $t$.
A finite set $R$ of rewrite rules is called a term rewriting system (TRS
for short) if every rewrite rule $l \rightarrow r \in R$ satisfies 
$ \mathcal V ar (r) \subseteq \mathcal V ar (l)$.
A TRS $R$ defines a rewriting relation $\rightarrow_R$ by 
$l \rightarrow r \in R \Rightarrow r[ l \theta ] \rightarrow_R r[ r
\theta ]$
for any context $r[ \cdot ]$ and substitution $\theta$.
Then $l \theta $ is called a redex.
The reflexive and transitive closure of $\rightarrow_R$ is denoted by
$\rightarrow_R^\ast$.

$\mathcal F$ is the smallest class of symbols built up from
$O^{k,l}, I^{k,l}_j, S, P, C, C_i, D $ by means of
$\sub^{k,l}$ and $\snrn^{k,l}$, e.g.,
if $g \in \mathcal F^{k', l+1}$ and 
$h_w, \varphi_w \in \mathcal F^{k+k', l+2}$ for all $w \in \Sigma^k$, then
$\snrn^{k+k', l+1} [g, \{ h_w, \varphi_w : w \in \Sigma^k \}] \in
\mathcal F^{k+k', l+1}$.
Superscripts of $\sub$ and $\snrn$ are always omitted if no
confusion arises.

Let $\mathcal{T(F, V)}$ be the set of terms over $\mathcal F$ and
$\mathcal V$.
Let $\mathcal T (\mathcal F)$ be the set of ground terms in $\mathcal{T(F, V)}$,
which are built up from $O^{0,0}$ by means of elements of $\mathcal F$.
Numerals are built up from $O^{0.0}$ by means of $C_0, C_1$.
We write $n, m, \dots $ for numerals and $0$ instead of $O^{0,0}$.
The binary length $|m|$ of a numeral $m$ is defined by
$|0|=1$, and $C_i (m;) = |m| +1$.
By replacing ``$=$'' occurring in the equations in Definition \ref{df_N}
by ``$\rightarrow$'', we obtain a schematic TRS for $\N$.
However, relaxing the definition, we introduce a non-deterministic or
non-confluent TRS $R_{\N}$
(according to the referee's suggestion).
  
\begin{df}[Term rewriting system $R_{\N}$]
\label{defS} 
\normalfont \
\def\labelitemi{--}
\begin{enumerate}
\item $O^{k,l} ( \vec x; \vec y) \rightarrow 0 \ (0 < k+l)$
\label{SO}
\item $I^{k,l}_j (x_1, \dots, x_k; x_{k+1}, \dots, x_{k+l}) \rightarrow
      x_j \ (1 \leq j \leq k+l)$
\item $S (;0) \rightarrow C_1 (0;)$
\item $S (; C_0 (x;)) \rightarrow C_1 (x;)$
\item $S (; C_1 (x;)) \rightarrow C_0 ( S (;x);)$
\item $P (;0) \rightarrow 0$
\item $P (; C_0 (x;)) \rightarrow C_1 ( P (;x);)$
\item $P (; C_1 (x;)) \rightarrow C_0 (x;)$
\item $C (; 0, x, y) \rightarrow x$
\item $C (; C_i (z;), x, y) \rightarrow y$
\item $D (;0) \rightarrow 0$
\item $D (;C_i (x;)) \rightarrow x$
\label{SD}
\item $\sub [h, \vec g, \vec \varphi ] ( \vec x; \vec y) \rightarrow h (
      \vec g ( \vec x;); \vec \varphi ( \vec x; \vec y))$
\label{Ssub}
\item $\snrn [g, \{ h_w, \varphi_w : w \in \Sigma^k \}] ( \vec 0, \vec
      x; \vec z, z') \rightarrow g ( \vec x; \vec  z, z')$
\label{Ssnrn1}
\item $\snrn [g, \{ h_w, \varphi_w : w \in \Sigma^k \} ] 
       (C_{w'} ( \vec y;), \vec x; \vec z, z') \rightarrow \\ \quad
       h_{w'} ( \vec v_1, \vec x; \vec z, z',
             \snrn [g, \{ h_w, \varphi_w : w \in \Sigma^k \} ] 
             ( \vec v_1, \vec x; \vec z, u)) \\ \quad
       [ \varphi_{w'} ( \vec v_2, \vec x; \vec z, z', 
                     \snrn [g, \{ h_w, \varphi_w : w \in \Sigma^k \} ] 
                     ( \vec v_2, \vec x; \vec z, z')) / u] 
      $ \\
for some $\vec v_1, \vec v_2 \lex^k C_{w'} ( \vec y; )$ $(w' \in \Sigma^k)$.
\label{Ssnrn2}
\end{enumerate}
\end{df}


The intended semantic $\pi$ for $\mathcal F$ is obvious.
For example, suppose that 
$ f = \sub [h, \vec g, \vec \varphi ]$. 
Then $\pi (f) \in \N$ is defined by safe composition from  
$\pi (h)$, $\pi ( \vec g)$ and $\pi (\vec \varphi)$ in $\N$:
$\pi (f) ( \vec x; \vec a) =
 \pi (h) ( \pi ( \vec g) ( \vec x; ) ; 
           \pi ( \vec \varphi ) ( \vec x; \vec a)
         )$.
However, we should be careful of the case $f= \snrn [g, \{ h_w, \varphi_w : w \in \Sigma^k \}]$.
Clearly, the TRS $R_{\N}$ is not confluent due to absence of $\lex^k$-functions
 $\f_1, \f_2$.
Let us recall that a $\lex$-function indicates which $\lex$-predecessor
 should be chosen.
The mapping $\pi$ is extended to $\mathcal{T(F)} \rightarrow \mathbb N$ by
$\pi (f( \vec t; \vec s)) := \pi (f) ( \pi (\vec t); \pi (\vec s))$.
\begin{lem}
\label{lem_termination}
$R_{\N}$ is terminating.
\end{lem}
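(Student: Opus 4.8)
The plan is to prove termination of $R_{\N}$ by exhibiting a well-founded ordering on $\mathcal{T(F, V)}$ that is compatible with every rewrite rule, i.e., strictly decreases under each reduction step. Since $R_{\N}$ is non-confluent (the $\lex^k$-predecessors $\vec v_1, \vec v_2$ are chosen nondeterministically in rule \ref{Ssnrn2}), the ordering must dominate \emph{all} nondeterministic choices simultaneously. First I would assign to each function symbol $f \in \mathcal F$ a natural-number rank reflecting its construction depth: symbols introduced by $\sub$ or $\snrn$ receive a rank strictly larger than all the symbols used in their definition. The base symbols $O^{k,l}, I^{k,l}_j, S, P, C, C_i, D$ get the smallest ranks. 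The key interpretation I would build is a lexicographically combined measure that tracks, in order of priority, (i) the multiset of recursion arguments occurring in $\snrn$-subterms measured along the ordering $\lex^k$, and (ii) a structural term-complexity that handles the non-recursive rules.

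The central difficulty is rule \ref{Ssnrn2}: the right-hand side contains \emph{three} copies of a $\snrn$-application, each on a strictly smaller first argument $\vec v_i \lex^k C_{w'}(\vec y;)$, but each copy is applied to an argument tuple that may be larger and more deeply nested than the redex. This is exactly where naive structural orderings fail and where nested recursion bites. The hard part will be verifying that $\lex^k$ is well-founded and that the passage from $C_{w'}(\vec y;)$ to any $\vec v_i$ strictly decreases the recursion measure regardless of the nondeterministic choice. I would lean on the analysis of the ordering $\lex$ from \cite{arai_eguchi}, which guarantees $\vec v_i \lex^k C_{w'}(\vec y;)$ for every admissible choice; the clause $\forall i > k_0 \exists j (v_i \lexeq^1 y_j)$ in the definition of $\lex^k$ is precisely what ensures the components of $\vec v_i$ do not grow, so that a multiset extension of $\lex^k$ over the $\snrn$-subterms is well-founded.

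Concretely, I would define the interpretation recursively on term structure. For the recursion measure I would assign to a ground-headed subterm $\snrn[\dots](\vec t; \vec s)$ a value built from the $\lex^k$-rank of the numerals denoted by $\vec t$, and take a multiset over all maximal $\snrn$-subterms; the multiset extension of a well-founded order is well-founded. Rule \ref{Ssnrn2} then decreases this multiset, since each of the three new $\snrn$-subterms has a strictly $\lex^k$-smaller recursion parameter, while the surrounding $h_{w'}, \varphi_{w'}$ contribute no new $\snrn$ of equal or larger rank (they are of strictly smaller construction rank than $f$). Rule \ref{Ssub} and rules \ref{SO}--\ref{SD} leave the recursion multiset unchanged or decrease it, and I would break ties with a secondary structural measure (for instance a polynomial interpretation counting symbol occurrences weighted by rank) that strictly decreases on these rules. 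The compatibility check is then routine for each of the seventeen rule schemes.

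One subtlety worth flagging: because $\mathcal F$ is an infinite signature (the $\snrn$ and $\sub$ symbols are parametrized by their defining subsymbols), I must ensure the rank assignment is well-defined, which follows from the inductive clause defining $\mathcal F$ as the smallest class closed under $\sub^{k,l}$ and $\snrn^{k,l}$: every symbol has a finite construction tree, so its rank is a well-defined natural number. With ranks well-defined and both the $\lex^k$-multiset order and the structural order well-founded, their lexicographic product is well-founded, and strict compatibility with all rules yields termination of $\rightarrow_{R_{\N}}$.
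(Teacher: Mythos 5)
Your overall strategy---a well-founded order compatible with every rule, driven by a rank on $\mathcal F$ that reflects construction depth---is the right shape, and your observation that every symbol of the infinite signature has a finite construction tree (so the rank, i.e.\ precedence, is well-founded) is exactly what the paper needs. But the paper finishes in one step from there: it takes the \emph{lexicographic path order} (LPO) induced by that precedence and checks that every rule of $R_{\N}$ is reducing under it. That choice is not cosmetic: LPO is closed under contexts and substitutions and tolerates duplicating rules by construction, and these are precisely the points where your hand-built measure breaks.

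Concretely, there are three gaps. First, duplication: a reduction order must satisfy $l\theta > r\theta$ for \emph{every} substitution $\theta$, and both rule \ref{defS}.\ref{Ssub} and rule \ref{defS}.\ref{Ssnrn2} duplicate variables ($\vec x$, $\vec z$, $z'$). If $\theta$ instantiates these with terms containing $\snrn$-symbols, the right-hand instance acquires extra copies of $\snrn$-subterms whose ranks need not be below the rank of the contracted redex, so a multiset over all $\snrn$-subterms does not decrease (for rule \ref{defS}.\ref{Ssub} nothing is removed at all, so it strictly increases), and your secondary occurrence-counting interpretation increases as well; your ``routine compatibility check'' inspects only the rule schemas, not their instances. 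Second, if you instead take the multiset over \emph{maximal} $\snrn$-subterms, as you propose, then contracting an $\snrn$-redex nested inside another $\snrn$-term leaves the primary measure untouched while rule \ref{defS}.\ref{Ssnrn2} enlarges any structural count, so the lexicographic combination fails to decrease on such steps (and termination must hold for all positions and strategies, not just innermost). Third, your rank is not well defined: the recursion arguments $\vec t$ of an $\snrn$-subterm need not be numerals, and since $R_{\N}$ is non-confluent (the paper stresses that the semantics $\pi$ is ambiguous on $\snrn$-terms), ``the numerals denoted by $\vec t$'' has no canonical meaning; defining it via reachable normal forms presupposes the termination you are proving. (A minor slip: the right-hand side of rule \ref{defS}.\ref{Ssnrn2} contains two, not three, $\snrn$-applications.) All three problems disappear under the paper's argument: for rule \ref{defS}.\ref{Ssnrn2} both sides have the same head symbol and $\vec v_i \lex^k C_{w'}(\vec y;)$ guarantees that the argument tuples decrease lexicographically (at the first position where they differ, the right-hand entry is a proper subterm of the left-hand one), while every argument on the right lies below the left-hand side in LPO; for rule \ref{defS}.\ref{Ssub} the head symbol $\sub[h,\vec g,\vec\varphi]$ has higher precedence than every symbol occurring on the right.
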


\begin{proof}
Define a precedence $<_{\mathcal F}$ on $\mathcal F$ by
\def\labelitemi{--}
\begin{itemize}
\item $0 <_{\mathcal F} O^{k, l}$ if $k+l >0$,
\item $g <_{\mathcal F} f$ for each
      $g \in \{ O^{k,l}, I^{k,l}_j, C_i \}$ and 
      $f \in \{ S, P, C, D \}$, 
\item $f <_{\mathcal F} \sub [h, \vec g, \vec \varphi ]$ for each
      $f \in \{ h, \vec g, \vec \varphi \}$, and
\item $f <_{\mathcal F} 
       \snrn [g, \{ h_w, \varphi_w : w \in \Sigma^k \} ]
      $ for each 
      $f \in \{ g, h_w, \varphi_w : w \in \Sigma^k \}$.
\end{itemize}
Then $R_{\N}$ is reducing under the lexicographic path order LPO induced by
$<_{\mathcal F}$.
Namely, $t \RN s \Rightarrow s <_{\mathrm{LPO}} t$ for each 
$t, s \in \mathcal{T(F, V)}$.
The well-foundedness of LPO implies the termination of $R_{\N}$.
\end{proof}

By the termination of $R_{\N}$, a normal form always exists, i.e.,
for any $t \in \mathcal{T(F, V)}$,  there exists a $t' \in \mathcal{T(F,
V)}$
such that $t \RN^\ast t'$ and $\nexists s$ s.t. $t' \RN s$.
It is not difficult to see that any normal form of each ground term is a
numeral. 

\begin{notation} \normalfont
Since $R_{\N}$ is not confluent,
there may be several normal forms of a ground term.
At the risk of confusion, let $\ul{t}$ denote a normal form of $t \in \mathcal{T(F)}$.
Hence, in paticular, $\ul{m} = m$ for any numeral $m$.
\end{notation}

\begin{df} \normalfont
The length $\lh (f)$ of $f \in \mathcal F$ is defined by
 \def\labelitemi{--}
 \begin{itemize}
 \item $\lh (f) =1$ if $f \in \{ O^{k,l}, I^{k,l}_j, S, P, C, C_i, D
                              \}$,
 \item $\lh (\sub [h, \vec g, \vec \varphi ]) =
        \lh (h) + \sum_{i=1}^{k'} \lh (g_i) +
        \sum_{i=1}^{l'} \lh (\varphi_i) + 1,
       $ and
 \item $\lh (\snrn [g, \{ h_w, \varphi_w  : w \in \Sigma^k \}])$
       $=\lh (g) + \sum_{w \in \Sigma^k} \lh (h_w)$ $+
        \sum_{w \in \Sigma^k} \lh (\varphi_w)$ $+1.
       $
 \end{itemize}
Then, the length $\lh (t)$ of  
$t \in \mathcal T (\mathcal F)$ is defined by
\[
 \lh (f( \vec t; \vec s)) = \lh (f) + \sum_{i=1}^k \lh (t_i) +
                            \sum_{i=1}^l \lh (s_i).
\]
\end{df}
Hence, in particular, $|m| = \lh (m) $ for any  numeral $m$. 
As a corollary, we observe Lemma \ref{lem_lh(t)}.
\begin{lem}
\label{lem_lh(t)}
If $t \rightarrow s \in R_{\N}$ by any of Definition
$\ref{defS}.\ref{SO}$ -- $\ref{defS}.\ref{SD}$, then
$\lh (s \theta) \leq \lh (t \theta)$ for any ground substitution $\theta$.
\end{lem}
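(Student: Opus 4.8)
The plan is a direct case analysis over the rule schemata of Definition~\ref{defS}.\ref{SO}--\ref{defS}.\ref{SD}, exploiting that $\lh$ is additive over the term structure. First I would fix, for a rule $l \rightarrow r$ and a ground substitution $\theta$, the notation $\|l\|$ for the number of function symbols occurring in $l$ (that is, the value of $\lh$ computed with each variable counted as $0$), and for each variable $v$ the occurrence counts $n_l(v)$, $n_r(v)$ of $v$ in $l$ and in $r$. Additivity of $\lh$ then gives
\[
 \lh(l\theta) = \|l\| + \sum_v n_l(v)\,\lh(v\theta), \qquad
 \lh(r\theta) = \|r\| + \sum_v n_r(v)\,\lh(v\theta),
\]
so the claim reduces to two facts uniform in $\theta$: the \emph{skeleton inequality} $\|r\| \leq \|l\|$ and the \emph{occurrence inequality} $n_r(v) \leq n_l(v)$ for every $v$.

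The occurrence inequality is the conceptual heart of the matter. Each of the twelve rules is left-linear, so $n_l(v) = 1$ for every $v \in \mathcal V ar (l)$; and one checks by inspection that no variable occurs more than once on the right-hand side either, whence $n_r(v) \leq 1 = n_l(v)$. Consequently $\sum_v n_r(v)\,\lh(v\theta) \leq \sum_v n_l(v)\,\lh(v\theta)$ no matter how large the substituted terms are. This is exactly the property that fails for the nested-recursion rule \ref{defS}.\ref{Ssnrn2}, which copies the recursion subterm, and it is the reason the lemma is restricted to \ref{defS}.\ref{SO}--\ref{defS}.\ref{SD}.

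It then remains to verify $\|r\| \leq \|l\|$ for each rule, a finite and routine count. The projection rule, both $C$-rules, the two $D$-rules, $P(;0) \rightarrow 0$, $S(;C_0(x;)) \rightarrow C_1(x;)$ and $P(;C_1(x;)) \rightarrow C_0(x;)$ even satisfy the strict inequality $\|r\| < \|l\|$, since they delete a symbol or a whole argument. The four remaining rules---$O^{k,l}(\vec x; \vec y) \rightarrow 0$, $S(;0) \rightarrow C_1(0;)$, and the two carry rules $S(;C_1(x;)) \rightarrow C_0(S(;x);)$ and $P(;C_0(x;)) \rightarrow C_1(P(;x);)$---satisfy $\|r\| = \|l\|$, each merely replacing one unit-length symbol by another (the carry rules additionally pushing the recursive symbol one level inward, which leaves the total count unchanged). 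In every case $\|r\| \leq \|l\|$, and combining this with the occurrence inequality yields $\lh(r\theta) \leq \lh(t\theta)$ for all ground $\theta$, as required. There is no genuine obstacle here; the single point deserving care is precisely the absence of variable duplication on right-hand sides, which separates these rules from the nested-recursion rule and is what the lemma's restriction to \ref{defS}.\ref{SO}--\ref{defS}.\ref{SD} is designed to isolate.
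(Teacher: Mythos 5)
Your proof is correct and matches the paper's approach: the paper offers no written proof at all, stating the lemma as an immediate corollary of the additive definition of $\lh$, and your skeleton/occurrence decomposition is exactly that routine verification made explicit, with each of the twelve cases checked correctly. One peripheral nitpick: right-hand-side variable duplication also occurs in the composition rule \ref{defS}.\ref{Ssub} (the normal variables $\vec x$ are copied into each $g_i$ and each $\varphi_j$), so it is not solely the nested-recursion rule \ref{defS}.\ref{Ssnrn2} that the restriction to \ref{defS}.\ref{SO}--\ref{defS}.\ref{SD} is guarding against.
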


In general, for $f \in \mathcal F$, 
$\max \{ \lh (t) : f(\vec m; \vec n) \RN^\ast t \}$ is not bounded by a
polynomial in $| \vec m |, | \vec n |$.
Consider the rewriting rule 
$f( C_i (y;); x) \rightarrow f(y; f(y; x))$.
For numerals $m$ and $n$,
it is possible to obtain 
the rewriting sequence
$f( m; n) \RN^\ast 
 f(0; f(0; \cdots f(0; n) \cdots ))$.
However, 
$\lh ( f(0; f(0; \cdots f(0; n) \cdots )) )=
 ( \lh (f) +1) \cdot 2^{|m|}+ |n|$,
which is not admissible.
Hence we employ the innermost rewriting strategy.

Let us use $t \rightarrow_{in} s$ to denote $t \RN s$ according to an
innermost strategy.
Namely, $t \rightarrow_{in} s$ if $t \RN s$ and the redex $r$ is
innermost, i.e., $r$ contains no redex that is a proper subterm of $r$.
The rest of this section is devoted to show that for every 
$f \in \mathcal F$ there exists a polynomial $p$ such that
$\max \{ \lh (t) : f ( \vec m; \vec n) \RS^\ast t \} \leq p (| \vec m|, |\vec n|)$.
Due to the innermost strategy,
$\max \{ \lh (t) : f( \vec t; \vec s) \RN^\ast t \}$ can be reduced as follows.

\begin{lem}
\label{lem_sp} 
For any $f \in \mathcal F^{k, l}$ and 
$t_1, \dots, t_k, s_1, \dots, s_l \in \mathcal T (\mathcal F)$,
$\max \{ \lh (t) : f( \vec t; \vec s) \RS^\ast t \}$ is bounded by either 
$\lh (f) + \sum_{i=1}^k \max \{ \lh (t_i^\ast) : t_i \RS^\ast t_i^\ast \} 
         + \sum_{i=1}^l \max \{ \lh (s_i^\ast) : s_i \RS^\ast s_i^\ast \}
$, or
$\max \{ \lh (t) : f( \ul{\vec t}; \ul{\vec s} ) \RS^\ast t \}$.
\end{lem}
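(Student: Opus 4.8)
The plan is to inspect an arbitrary innermost reduction sequence issuing from $f(\vec t; \vec s)$ and to split it at the first step (if any) that contracts a redex at the root position. First I would fix a reduction $f(\vec t; \vec s) = u_0 \RS u_1 \RS \cdots \RS u_n$ and let $p \leq n$ be least such that the step $u_p \RS u_{p+1}$ rewrites the whole term at the root; if no step ever contracts the root, I set $p = n$. The lemma then reduces to the claim that every $u_j$ has length bounded by the first displayed quantity or by the second, since taking the maximum over all terms reachable from $f(\vec t; \vec s)$ afterwards yields the stated bound.

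For the initial segment $j \leq p$ I would argue that no root contraction has yet occurred, so every contracted redex lies strictly below the root and the outermost symbol remains $f$. Hence $u_j = f(\vec t^{(j)}; \vec s^{(j)})$ with $t_i \RS^\ast t_i^{(j)}$ and $s_i \RS^\ast s_i^{(j)}$, and by the definition of $\lh$ the value $\lh(u_j) = \lh(f) + \sum_i \lh(t_i^{(j)}) + \sum_i \lh(s_i^{(j)})$ is bounded by $\lh(f) + \sum_i \max\{\lh(t_i^\ast) : t_i \RS^\ast t_i^\ast\} + \sum_i \max\{\lh(s_i^\ast) : s_i \RS^\ast s_i^\ast\}$, which is the first bound. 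In particular, when $p = n$ the entire sequence is controlled by the first bound.

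When $p < n$, the step $u_p \RS u_{p+1}$ is an innermost root contraction, and here I would invoke the innermost condition: the redex $u_p$ contains no proper subterm that is a redex, so every argument of $u_p$ is already a normal form. Combined with the previous paragraph this forces $u_p = f(\ul{\vec t}; \ul{\vec s})$, where $\ul{t_i}$ and $\ul{s_i}$ are the normal forms of $t_i$ and $s_i$ produced along the chosen sequence. Since the tail $u_p \RS \cdots \RS u_n$ is itself an innermost reduction out of $f(\ul{\vec t}; \ul{\vec s})$, every $u_j$ with $j \geq p$ satisfies $\lh(u_j) \leq \max\{\lh(t) : f(\ul{\vec t}; \ul{\vec s}) \RS^\ast t\}$, the second bound. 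Together the two cases show that each term of the sequence, and hence the maximum over all reductions from $f(\vec t; \vec s)$, is bounded by one of the two stated quantities.

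I expect the delicate point to be the use of the innermost strategy precisely at the splitting step: it is exactly the innermost condition that guarantees the arguments are in normal form at the moment the root is contracted, so that the post-contraction behaviour is captured by a reduction from $f(\ul{\vec t}; \ul{\vec s})$ rather than from a term with still-reducible arguments. I would also remark that the non-confluence of $R_{\N}$ causes no difficulty, since $\ul{\vec t}$ and $\ul{\vec s}$ are taken to be the particular normal forms reached along the fixed sequence, which is all the second bound requires.
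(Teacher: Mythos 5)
Your proof is correct and takes essentially the same route as the paper: the innermost condition forces all arguments to be normal forms at the moment of the first root contraction, so any reduction from $f(\vec t; \vec s)$ factors into an argument-normalization prefix (covered by the first bound, since the root symbol stays $f$ and the arguments evolve by innermost reductions of their own) and a tail issuing from $f(\ul{\vec t}; \ul{\vec s})$ (covered by the second bound). The only difference is cosmetic: the paper first disposes of the initial symbols $O, I, C_i, S, P, C, D$ separately via Lemma \ref{lem_lh(t)}, obtaining the first bound alone in that case, whereas you treat all $f \in \mathcal F$ uniformly by splitting the sequence at the first root step.
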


In the following proofs, we use $\Sp (t)$ to abbreviate 
$\max \{ \lh (s): t \RS^{\ast} s \}$,
which intended to denote the \emph{space} 
required to rewrite $t \in \mathcal T (\mathcal F)$ according to 
$\rightarrow_{in}$.  

\begin{proof}
Suppose that 
$f \in \{ O^{k, l}, I^{k, l}_j, C_i, D, S, P, C \}$.
From Lemma \ref{lem_lh(t)}, we observe that 
\begin{equation}
\label{initialcase}
 \Sp (f( \vec t; \vec s)) \leq
 \lh (f) + \sum_{i=1}^k \Sp (t_i) + \sum_{i=1}^l \Sp (s_i).
\end{equation}
Otherwise, 
$f \in \{ \sub [h, \vec g, \vec \varphi ],
          \snrn [g, \{ h_w, \varphi_w : w \in \Sigma^k \}]
       \}
$.
Then, due to the innermost strategy,
an arbitrary terminating rewriting which starts with 
$f( \vec t; \vec s)$ runs as 
\begin{eqnarray*}
f( \vec t; \vec s) &\RS^\ast&
f( \ul{\vec t} ; \ul{ \vec s}) 
\RS^\ast \ul{f( \ul{\vec t}; \ul{\vec s})}.
\end{eqnarray*}
Thus, $\Sp (f( \vec t; \vec s))$ is bounded by either 
$\lh (f) + \sum_{i=1}^k \Sp (t_i) + \sum_{i=1}^l \Sp (s_i)$ or
$\Sp (f( \ul{\vec t} ; \ul{ \vec s}))$.
This concludes the lemma.
\end{proof}
 
\begin{df}
\normalfont
For $d, b_1, \dots, b_k \in \mathbb N$, put
\[
 \sum (d, \vec b) =
 \sum (d, b_1, \dots, b_k):=
 \sum_{i=1}^k ( \max \vec b +1 )^{d-i} b_i.
\]
\end{df}
Then, as in \cite{arai_eguchi}, we can prove a fundamental lemma concerning
the descending lengths with respect to the ordering $\lex$. 

\begin{lem} 
\label{lem_pred}
Suppose that $\vec n$ and $\vec m$ are numerals.
If $ \vec n \lex^k \vec m$ and $k \leq d$, then
$\sum (d, | \vec n| ) < \sum (d , | \vec m| )$.
\end{lem}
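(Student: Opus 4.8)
The plan is to translate the combinatorial relation $\vec n \lex^k \vec m$ into three elementary length inequalities and then read $\sum(d, |\vec n|)$ as a ``base-$B$ positional number,'' showing that the drop forced at one coordinate outweighs any increase the remaining coordinates can contribute. Throughout I write $a_i = |n_i|$ and $b_i = |m_i|$, and set $A = \max |\vec n| + 1$ and $B = \max |\vec m| + 1$, so that $\sum(d, |\vec n|) = \sum_{i=1}^k A^{d-i} a_i$ and $\sum(d, |\vec m|) = \sum_{i=1}^k B^{d-i} b_i$. All exponents are nonnegative, since $d - i \geq d - k \geq 0$ by the hypothesis $k \leq d$, and $B \geq 2$ because every numeral has length at least $1$.

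First I would unfold the definition of $\lex^k$ to fix the witnessing index $k_0 \in \{1, \dots, k\}$ and extract: (i) $a_i = b_i$ for $i < k_0$; (ii) $n_{k_0} \lex^1 m_{k_0}$, whence $m_{k_0} \in \{C_0(n_{k_0};), C_1(n_{k_0};)\}$ and so $a_{k_0} = b_{k_0} - 1$; and (iii) for each $i > k_0$ there is a $j$ with $n_i \lexeq^1 m_j$, so $a_i \leq |m_j| \leq \max |\vec m| = B - 1$. Facts (i)--(iii) also give $\max |\vec n| \leq \max |\vec m|$, hence the crucial monotonicity $A \leq B$; this is what lets me compare the two sums despite their different bases. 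Indeed, since $A \leq B$ and the exponents are nonnegative, $A^{d-i} a_i \leq B^{d-i} a_i$ for every $i$, so it suffices to prove $\sum_{i=1}^k B^{d-i} a_i < \sum_{i=1}^k B^{d-i} b_i$ — that is, I may over-estimate the left-hand side so that both sums use the single base $B$.

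The heart is then a positional (geometric-series) estimate of the tail. Using (iii), $\sum_{i>k_0} B^{d-i} a_i \leq (B-1) \sum_{i=k_0+1}^{k} B^{d-i} = B^{d-k_0} - B^{d-k}$, the closed form coming from telescoping $(B-1)\sum_{i=k_0+1}^k B^{d-i}$. Combining with (i) and (ii),
\[
 \sum_{i=1}^k B^{d-i} a_i
 \leq \sum_{i<k_0} B^{d-i} b_i + B^{d-k_0}(b_{k_0}-1) + \left(B^{d-k_0} - B^{d-k}\right)
 = \sum_{i \leq k_0} B^{d-i} b_i - B^{d-k},
\]
which is strictly less than $\sum_{i=1}^k B^{d-i} b_i$ since the omitted terms $\sum_{i>k_0} B^{d-i} b_i$ are nonnegative and $B^{d-k} > 0$. (When $k_0 = k$ the tail is empty and the estimate degenerates correctly to $B^{d-k} - B^{d-k} = 0$.) Chaining this with $A^{d-i} a_i \leq B^{d-i} a_i$ yields $\sum(d, |\vec n|) < \sum(d, |\vec m|)$.

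The main obstacle to watch is exactly the base mismatch: the left and right sums are weighted by powers of $A$ and $B$ respectively, so a naive coordinatewise comparison fails. The resolution is to prove $A \leq B$ from the length facts and then over-estimate in base $B$. After that the argument is the familiar observation that, in a base-$B$ positional representation, a unit drop in one digit (the $-1$ at coordinate $k_0$, worth $-B^{d-k_0}$) strictly dominates the largest carry the less-significant digits can accumulate (bounded by $B^{d-k_0} - B^{d-k} < B^{d-k_0}$). I should still double-check the boundary cases $k_0 = 1$ and $k_0 = k$ and the requirement $B \geq 2$ that validates the geometric-sum formula.
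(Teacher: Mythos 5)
Your proof is correct. The paper itself does not include a proof of this lemma (it defers to the cited Arai--Eguchi paper), and your argument --- first deriving $\max|\vec n| \le \max|\vec m|$ so both sums can be over-estimated in the single base $B = \max|\vec m|+1$, then showing the unit drop at coordinate $k_0$, worth $B^{d-k_0}$, strictly dominates the tail bound $(B-1)\sum_{i>k_0} B^{d-i} = B^{d-k_0}-B^{d-k}$ --- is exactly the standard positional-representation argument intended there, so this is essentially the same approach.
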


\begin{lem}
\label{lem_lh}
For any $f \in \mathcal F$
there exist some constants $c$ and $d$ such that
for numerals $\vec m, \vec n$ and an arbitrary normal form 
$\ul{ f( \vec m; \vec n)}$ of $f( \vec m; \vec n)$,
$ 2^{| \ul{f( \vec m; \vec n)}|} \leq
 2^{c( \max |m|^d + 1)} +
 2^{ \max |n|}
$,
and hence
$| \underline{f ( \vec m; \vec n)}| \leq
\max \{ c( \max |m|^d +1 ),
        \max |n|
     \} +1.
$
\end{lem}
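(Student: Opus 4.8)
The plan is to prove Lemma \ref{lem_lh} by induction on the length $\lh(f)$ of the function symbol $f \in \mathcal F$, following the inductive structure by which $\mathcal F$ is generated. The quantity being bounded is $2^{|\ul{f(\vec m; \vec n)}|}$, and the bound has a revealing additive shape: a term $2^{c(\max|m|^d+1)}$ depending only on the \emph{normal} inputs $\vec m$, plus a term $2^{\max|n|}$ depending on the \emph{safe} inputs $\vec n$. The reason to bound $2^{|\cdot|}$ rather than $|\cdot|$ directly is that $2^{|w|}$ is (up to a factor of two) the numerical value $w$ itself, so the additive decomposition tracks how the value of the output splits into a part driven by normal arguments and a part that merely carries the safe arguments along. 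This is exactly the Bellantoin--Cook discipline: safe arguments may only be copied, shifted, or combined additively, never subjected to recursion or multiplication, so their contribution stays linear in magnitude.

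\paragraph{}First I would dispatch the base cases, where $f \in \{O^{k,l}, I^{k,l}_j, S, P, C, C_i, D\}$. For each of these the normal form $\ul{f(\vec m;\vec n)}$ is computed directly and one checks the inequality by hand with small constants; the only cases that touch the normal arguments are $O^{k,l}$, the projections $I^{k,l}_j$ with $j\le k$, and $C_i$, and in each the output value is bounded by a polynomial in the $\max|m|$, which the $2^{c(\max|m|^d+1)}$ summand absorbs. The successor, predecessor, cases, and deletion all act on safe arguments and at most increment or decrement a value, so they are controlled by the $2^{\max|n|}$ summand. Next, for safe composition $f = \sub[h, \vec g, \vec\varphi]$, I would apply the induction hypothesis to $\vec g$ (which have only normal arguments, producing bounds purely in $\max|m|$), feed those outputs into the induction hypothesis for $h$ through its normal slots, and feed the $\vec\varphi$-outputs through its safe slots. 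The key structural point is that $\vec g(\vec x;)$ occupy normal positions of $h$ while $\vec\varphi(\vec x;\vec a)$ occupy safe positions, so composing the bounds respects the additive split: the normal contribution compounds polynomially (justifying the choice of a fresh larger exponent $d$ and constant $c$), while the safe contribution remains additive.

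\paragraph{}The main obstacle, as expected, is the SNRN case $f = \snrn[g, \{h_w, \varphi_w : w \in \Sigma^k\}]$, where I would argue by an inner induction along the lexicographic ordering $\lex^k$ on the recursion arguments, the termination of which is guaranteed by Lemma \ref{lem_pred}. The difficulty is that the recursion nests: the step equation invokes $f$ at two distinct $\lex^k$-predecessors $\vec v_1, \vec v_2$, and the result of the inner call at $\vec v_2$ is substituted (via $\varphi_{w'}$) into a safe argument of the outer call at $\vec v_1$. The crucial invariant to maintain is that each recursive call of $f$ contributes only to the \emph{safe} output budget $2^{\max|n|}$ of the enclosing $h_{w'}$ or $\varphi_{w'}$, never to the normal budget, precisely because in the scheme the recursion terms $f(\vec v_i, \vec x; \dots)$ sit in safe positions of $h_w$ and $\varphi_w$. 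Consequently, unwinding the recursion over a $\lex^k$-descending chain accumulates the safe contributions additively rather than multiplicatively; and since Lemma \ref{lem_pred} shows $\sum(d, |\vec n|)$ strictly decreases along $\lex^k$, the number of iterations and hence the accumulated magnitude is controlled by a polynomial in $\max|m|$ of degree governed by $d \ge k$. I would set the constants $c, d$ for $f$ by taking $d$ strictly larger than the maximum of the $d$'s from $g$ and all $h_w, \varphi_w$ and at least $k$, and $c$ large enough to absorb the finitely many summands over $w \in \Sigma^k$; verifying that this single pair of constants works uniformly across the whole $\lex^k$-descending computation, without the exponent blowing up through the nesting, is the delicate bookkeeping that the proof must carry out, and it is where the restriction to a single nesting level (Remark \ref{rem_snrn}) and the safe/normal separation do the real work.
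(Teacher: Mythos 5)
Your skeleton coincides with the paper's: induction on $\lh(f)$, direct verification for the initial symbols, compounding of the normal (polynomial) and safe (additive) budgets for $\sub[h,\vec g,\vec\varphi]$, and, for $\snrn$, an inner descent along $\lex^k$ controlled by the measure $\sum(d,\cdot)$ of Lemma \ref{lem_pred}, using the fact that recursive calls occupy only safe positions. Up to that point the two arguments agree.

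The gap is precisely in what you defer as ``delicate bookkeeping,'' and it is an idea, not bookkeeping. The inner induction cannot be run on the statement of the lemma itself. Write $b:=\max\{|\vec m^y|,|\vec m^x|\}+1$. If the hypothesis available at a $\lex^k$-predecessor is only $2^{|\ul{f(\vec v,\vec m^x;\ul{\vec t})}|}\leq 2^{c(b^d+1)}+2^{\max|\ul{\vec t}|}$, then one recursion step --- inner call at $\vec v_2$, then $\varphi_w$, then outer call at $\vec v_1$, then $h_w$ --- yields a bound of the form $4\cdot 2^{c(b^d+1)}+2^{\max|\ul{\vec t}|}$, and the factor $4$ cannot be removed: you cannot enlarge $c$ mid-induction, so the induction does not close, and iterating over the descending chain accumulates $2^{O(\mathrm{depth})}$ on the normal term. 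Your proposed remedy, taking $c$ ``large enough to absorb the finitely many summands over $w\in\Sigma^k$,'' targets the wrong quantity: only one $w$ is active at each step, and what accumulates is this per-step constant factor along the descent, not a sum over $\Sigma^k$. The paper's device is to \emph{strengthen the side-induction statement} so that the decreasing measure sits in the exponent: it proves, by side induction on $\sum(d,|\vec m^y|)$, that $2^{|\ul{f(\vec m^y,\vec m^x;\ul{\vec t})}|}\leq 2^{c(b^d+\sum(d,|\vec m^y|)+1)}+2^{\max|\ul{\vec t}|}$ with $c\geq 2$. Since each predecessor strictly decreases $\sum$ (Lemma \ref{lem_pred}), every step has $2^c\geq 4$ of headroom in the exponent, which exactly absorbs the factor $4$. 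Your two structural observations --- that the coefficient of the safe term stays $1$ because recursive results enter only through a $\max$ over safe arguments, and that the descent has polynomial length --- are correct and are what make this strengthened statement provable; but without the strengthened statement (or an equivalent explicit recurrence for the accumulated factor), the SNRN case of your proof does not go through.
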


\begin{proof}
We prove the lemma by induction on $\lh (f)$.
For the base case, if $\lh (f) =1$, i.e., 
$f \in \{O, I, C_i, S, P, D, C\}$, then
$2^{| \ul{f( \vec m; \vec n)}|} \leq 
 2^{\max | \vec m| +1} + 2^{\max | \vec n|}
$.

Consider the induction step.
Suppose that $f = \sub [h, \vec g, \vec \varphi ]$ for
$h \in \N^{k', l'}$, 
$g_1, \dots, g_{k'} \in \N^{k, 0}$ and
$\varphi_1, \dots, \varphi_{l'} \in \N^{k, l}$. 
Then, by the induction hypothesis, there exist some constants 
$c_0, d_0$ for $h$, 
$c_i, d_i$ for $g_i$ $(i=1, \dots, k')$ and 
$c_{k+i}, d_{k+i}$ for $\varphi_{i}$ $(i=1, \dots, l')$
which enjoy the condition.
Fix an normal form $\ul{g_i ( \vec m;)}$ of 
$g_i ( \vec m;)$ for each $i=1, \dots, k'$ and
$\ul{\varphi_i ( \vec m; \vec n)}$ of 
$\varphi_i ( \vec m; \vec n)$ for each $i=1, \dots, l'$.
Let
$c' := \max \{ c_0, c_1, \dots, c_{k'+l'} \}$ and
$d' := \max \{ d_0, d_1, \dots, d_{k'+l'} \}$.
Then, by IH for $\vec g$ and $\vec \varphi$,
\begin{eqnarray}
|\ul{g_i ( \vec m;)}| &\leq&
c' ( \max |\vec m|^{d'} +1) \
(i=1, \dots, k'), 
\label{lh_g} \\
2^{|\ul{\varphi_i ( \vec m; \vec n)}|} &\leq&
2^{ c' ( \max |\vec m|^{d'} +1)
  } +
2^{ \max |\vec n|} \
(i=1, \dots, l').
\label{lh_phi}
\end{eqnarray}
Fix an normal form 
$\ul{f( \vec m; \vec n)} = 
 \ul{h( \ul{ \vec g( \vec m;)};
        \ul{ \vec \varphi ( \vec m; \vec n)}
      )
    }
$
of $f( \vec m; \vec n)$.
By IH for $h$,
\begin{eqnarray}
2^{|\ul{f( \vec m; \vec n)}|} &=&
2^{|\ul{h( \ul{\vec g ( \vec m; \vec n)}; 
           \ul{\vec \varphi ( \vec m; \vec n)}
        )
       }|
  } 
\nonumber 
\\
&\leq&
2^{ c' ({ \max |\ul{ \vec g( \vec m;)}|}^{d'} +1)
  } +
2^{ \max |\ul{ \vec \varphi ( \vec m; \vec n)}|
  }.
\label{lh_h}
\end{eqnarray}
Put $c:= 2^{d'} c'^{d'+1} +1$ and 
$d := d'^2$.
Then,
\begin{eqnarray}
c' ({ \max |\ul{ \vec g( \vec m;)}|}^{d'} +1) &\leq&
c' ({ c' ( \max |\vec m|^{d'} +1)}^{d'} +1)
\quad \text{by (\ref{lh_g})}, 
\nonumber \\ 
&\leq&
c' ( c'^{d'} 2^{d'} \max |\vec m|^{d'^2} +1) 
\nonumber
\\ &\leq&
2^{d'} c'^{d'+1} ( \max |\vec m|^{d'^2} +1)
\nonumber
\\ &\leq&
(c-1) ( \max |\vec m|^d +1)
\quad \text{by Def. of } c, d.
\label{lh_h(g)}
\end{eqnarray}
By (\ref{lh_h}), (\ref{lh_h(g)}) and (\ref{lh_phi}),
\begin{eqnarray*}
&&
2^{|\ul{f( \vec m; \vec n)}|} \\
&\leq&
2^{(c-1) ( \max |\vec m|^d +1)} +
2^{ c' ( \max |\vec m|^{d'} +1)
  } +
2^{ \max |\vec n|} 
\\ &\leq&
2^{(c-1) ( \max |\vec m|^d +1) +1} +
2^{ \max |\vec n|} 
\quad \text{by } c' \leq c-1,
\\ &\leq&
2^{c ( \max |\vec m|^d +1) } +
2^{ \max |\vec n|}.
\end{eqnarray*}
 
For the case for SNRN, it is convenient to consider a more general form
(\ref{general_snrn}) in Remark \ref{rem_snrn}:
\begin{equation*}
 \begin{cases}
 f( \vec 0, \vec x; \vec a) = g( \vec x; \vec a), \\
 f( C_w ( \vec y), \vec x; \vec a) =
 h_w ( \vec v_1, \vec x; \vec a,
      f( \vec v_1, \vec x; 
        \vec \varphi_w ( \vec v_2, \vec x; \vec a,
                        f( \vec v_2, \vec x; \vec a)
                       )
       )
     ) \\
 (\vec \varphi_w = \varphi_{w, 1}$, $\dots$, $\varphi_{w, l}, \ l>0)
 \end{cases}
\end{equation*}
Let
$f = \snrn [g, \{ h_w, \vec \varphi_w : w \in \Sigma^k \}] 
 \in \mathcal F^{k+k', l}
$
for
$g \in \mathcal F^{k', l}$ and
$h_w$, $\varphi_{w, 1}$, $\dots$, $\varphi_{w, l}$ 
$\in \mathcal F^{k+k', l+1}$. 
By IH, there exist some constants $c_0, d_0$ for $g$,
$c_1, d_1$ for $h_w$, and 
$c_2, d_2$ for $\vec \varphi_w$ which enjoy the condition for all 
$w \in \Sigma^k$.
Put $c:= \max \{ c_0, c_1, c_2, 2  \}$ and
$d:= \max \{ d_0, d_1, d_2, k \}$.
Suppose that
$m^y_1, \dots, m^y_k$ and $m^x_1, \dots, m^x_{k'}$ are numerals.
Then, by side induction on 
$\sum (d, | \vec m^y | )$,
we show that for any
$\vec t = t_1, \dots, t_l \in \mathcal{T(F)}$ and 
their arbitrary normal forms 
$\ul{\vec t} = \ul{t_1}, \dots, \ul{t_l}$,
\[
 2^{| \ul{f( \vec m^y, \vec m^x; \ul{\vec t})}|} \leq
 2^{c (( \max \{ |\vec m^y|, |\vec m^x| \} +1)^d + 
       \sum (d, | \vec m^y | ) +1
      )
   } +
 2^{\max | \ul{\vec t}|}.
\]
Put 
$b := \max \{ |\vec m^y|, |\vec m^x| \} +1$ and
$b_i := |m^y_i|$ 
for each $i=1, \dots, k$.
First consider the case $\vec m^y = \vec 0$:
$f( \vec 0, \vec m^x; \ul{\vec t}) \RS g( \vec m^x; \ul{\vec t})$.
This case follows from IH as
\begin{eqnarray*}
2^{| \ul{f( \vec 0, \vec m^x; \ul{\vec t})}|}
&=&
2^{| \ul{g( \vec m^x; \ul{\vec t})}|} \\
&\leq&
2^{c_0 ( b^{d_0} +1 )} + 2^{\max | \ul{\vec t}|} 
\quad \text{by IH for } g, \\
&\leq&
 2^{c ( b^d + \sum (d, \vec b) +1 )} +
 2^{\max | \ul{\vec t}|}
\quad \text{by } c_0 \leq c, \text{ and } d_0 \leq d.
\end{eqnarray*}
Next consider the case $\vec m^y \neq \vec 0$:
\begin{eqnarray*}
f( \vec m^y, \vec m^x; \ul{\vec t}) &\RS&
 h_w ( \vec m^{v}_1, \vec m^x; \ul{\vec t},
      f( \vec m^{v}_1, \vec m^x; 
        \vec \varphi_w ( \vec m^{v}_2, \vec m^x; \ul{\vec t},
                        f( \vec m^{v}_2, \vec m^x; \ul{\vec t})
                       )
       )
     )
\\
&\RS^\ast&
 h_w ( \vec m^{v}_1, \vec m^x; \ul{\vec t},
      f( \vec m^{v}_1, \vec m^x; 
        \vec \varphi_w ( \vec m^{v}_2, \vec m^x; \ul{\vec t},
                        \ul{f( \vec m^{v}_2, \vec m^x; \ul{\vec t})}
                       )
       )
     )
\\
&\RS^\ast&
 h_w ( \vec m^{v}_1, \vec m^x; \ul{\vec t},
      f( \vec m^{v}_1, \vec m^x; 
\ul{        \vec \varphi_w ( \vec m^{v}_2, \vec m^x; \ul{\vec t},
                        \ul{f( \vec m^{v}_2, \vec m^x; \ul{\vec t})}
                       )
}       
       )
     )
\\
&\RS^\ast&
 h_w ( \vec m^{v}_1, \vec m^x; \ul{\vec t},
\ul{      f( \vec m^{v}_1, \vec m^x; 
\ul{        \vec \varphi_w ( \vec m^{v}_2, \vec m^x; \ul{\vec t},
                        \ul{f( \vec m^{v}_2, \vec m^x; \ul{\vec t})}
                       )
}       
       )
}     ).
\end{eqnarray*}
for some
$ \vec m^{v}_1, \vec m^{v}_2 \lex^k \vec m^y$ and 
a suitable $w \in \Sigma^k$.

By Lemma \ref{lem_pred}, for each $j=1, 2$,
\begin{equation}
 \sum (d, |\vec m^v_j| ) < \sum (d, \vec b).
\label{sih}
\end{equation}
Hence, by the side induction hypothesis and 
$\max | \vec m^v_2 | \leq \max | \vec m^y |$,
\begin{equation*}
 2^{| \ul{f( \vec m^v_2, \vec m^x; \ul{\vec t} )}|} \leq
 2^{c ( b^d + \sum (d, | \vec m^v_2 | ) +1)} +
 2^{\max | \ul{\vec t}|} \leq
 2^{c ( b^d + \sum (d, \vec b) )} +
 2^{\max | \ul{\vec t} |}.
\end{equation*}
From this and IH for $\vec \varphi_w$,
\begin{eqnarray}
&&
2^{\max |
\ul{        \vec \varphi_w ( \vec m^{v}_2, \vec m^x; \ul{\vec t},
                        \ul{f( \vec m^{v}_2, \vec m^x; \ul{\vec t})}
                       )
}|       
} 
\nonumber
\\
&\leq&
2^{c_2 ( b^{d_2} +1)} +
2^{ \max \{ |\ul{\vec t}|, | \ul{f( \vec m^v_2, \vec m^x; \ul{\vec t} )}| \}
  } 
\nonumber
\\
&\leq&
2^{c_2 ( b^{d_2} +1)} +
 2^{c ( b^d + \sum (d, \vec b) )} +
 2^{\max | \ul{\vec t} |} 
\nonumber
\\
&\leq&
 2 \cdot
 2^{c ( b^d + \sum (d, \vec b) )} +
 2^{\max | \ul{\vec t} |} 
\quad \text{by } c_2 \leq c, \ d_2 \leq d \text{ and } 
1 \leq \sum (d, \vec b).
\label{sih_2}
\end{eqnarray}
Similarly to the case for $\vec m^v_2$, 
by SIH together with (\ref{sih}) for $\vec m^v_1$,
\begin{eqnarray*}
&&
2^{|
\ul{      f( \vec m^{v}_1, \vec m^x; 
\ul{        \vec \varphi_w ( \vec m^{v}_2, \vec m^x; \ul{\vec t},
                        \ul{f( \vec m^{v}_2, \vec m^x; \ul{\vec t})}
                       )
}       
       )
}|
} \\
&\leq&
 2^{c ( b^d + \sum (d, | \vec m^v_1 | ) +1)} +
2^{\max |
\ul{        \vec \varphi_w ( \vec m^{v}_2, \vec m^x; \ul{\vec t},
                        \ul{f( \vec m^{v}_2, \vec m^x; \ul{\vec t})}
                       )
}|       
} 
\\
&\leq&
 2^{c ( b^d + \sum (d, \vec b ))} +
 2 \cdot
 2^{c ( b^d + \sum (d, \vec b) )} +
 2^{\max | \ul{\vec t} |} 
\quad \text{by (\ref{sih_2}),} \\
&\leq&
 3 \cdot
 2^{c ( b^d + \sum (d, \vec b) )} +
 2^{\max | \ul{\vec t} |}. 
\end{eqnarray*}
Therefore,
\begin{eqnarray*}
&&
 2^{| \ul{f( \vec m^y, \vec m^x; \ul{\vec t})}|} \\
&=&
2^{\ul{|
 h_w ( \vec m^{v}_1, \vec m^x; \ul{\vec t},
\ul{      f( \vec m^{v}_1, \vec m^x; 
\ul{        \vec \varphi_w ( \vec m^{v}_2, \vec m^x; \ul{\vec t},
                        \ul{f( \vec m^{v}_2, \vec m^x; \ul{\vec t})}
                       )
}       
       )
}     )
}|
} \\
&\leq&
2^{c_1 (b^{d_1} +1)} +
3 \cdot 2^{c ( b^d + \sum (d, \vec b))} +
2^{\max | \ul{\vec t}|} 
\quad \text{by IH for } h_w, 
\\
&\leq&
4 \cdot 2^{c ( b^d + \sum (d, \vec b))} +
2^{\max | \ul{\vec t}|} 
\quad \text{by} \ c_1 \leq c, \ d_1 \leq d \ \text{and} \ 
                1 \leq \sum (d, \vec b), \\
&\leq&
2^{c ( b^d + \sum (d, \vec b) +1)} +
2^{\max | \ul{\vec t}|} \quad \text{by} \ 2 \leq c. 
\end{eqnarray*}
This completes the proof of the lemma.
\end{proof}


\begin{thm}
\label{thm_FsubsetFPS}
For any $f \in \mathcal F^{k,l}$, there exist $c$ and $d$ such that
for numerals $m_1, \dots, m_k$ and 
$n_1, \dots, n_l$,
$ \max \{ \lh (t) : f( \vec m; \vec n) \RS^\ast t \} \leq
 c ( \max | \vec m|^d +1) ( \max |\vec n| +1)
$.
\end{thm}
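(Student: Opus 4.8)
The plan is to prove the theorem by induction on $\lh (f)$, keeping the bound in the product form $c ( \max | \vec m|^d +1)( \max |\vec n| +1)$ at every stage so that the induction hypothesis can be reapplied; note that the quantity to be bounded is exactly $\Sp (f( \vec m; \vec n))$ in the abbreviation introduced before Lemma \ref{lem_sp}. For the base case $\lh (f) =1$, i.e. $f \in \{ O, I, S, P, C, C_i, D \}$, Lemma \ref{lem_lh(t)} shows that every innermost step leaves $\lh$ non-increasing and creates no new non-initial redex, so $\Sp (f( \vec m; \vec n)) \leq \lh (f) + \sum_i |m_i| + \sum_i |n_i|$, which is dominated by the claimed bound with $d=1$ and $c$ chosen according to the arities.

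For the composition case $f = \sub [h, \vec g, \vec \varphi]$, the first step yields $f( \vec m; \vec n) \RS h( \vec g( \vec m;); \vec \varphi ( \vec m; \vec n))$, and I would bound $\Sp$ of the right-hand side by invoking Lemma \ref{lem_sp} with head symbol $h$: it is at most the maximum of $\lh (h) + \sum_i \Sp (g_i ( \vec m;)) + \sum_i \Sp ( \varphi_i ( \vec m; \vec n))$ and $\Sp (h( \ul{ \vec g( \vec m;)}; \ul{ \vec \varphi ( \vec m; \vec n)}))$. Since each $g_i$ and $\varphi_i$ has length strictly smaller than $\lh (f)$, the first expression is controlled by the induction hypothesis applied to these functions on the numeral inputs; for the second, the induction hypothesis for $h$ applies once the arguments are replaced by their normal forms, while Lemma \ref{lem_lh} bounds $| \ul{g_i ( \vec m;)}|$ by $\mathrm{poly} ( \max | \vec m|)$ and $| \ul{ \varphi_i ( \vec m; \vec n)}|$ by $\mathrm{poly} ( \max | \vec m|) + \max | \vec n|$. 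Substituting these binary lengths into the hypothesis for $h$ and enlarging the exponent $d$ to absorb the extra powers of $\max | \vec m|$ keeps the bound in product form.

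The substantial case is $f = \snrn [g, \{ h_w, \varphi_w : w \in \Sigma^k \}]$, which I would handle by a side induction on $\sum (d, | \vec m^y|)$, where $\vec m^y$ denotes the recursion arguments; this measure is well-founded and, by Lemma \ref{lem_pred}, strictly decreases along every $\lex^k$-predecessor. It is essential to phrase the side induction hypothesis for \emph{arbitrary} numeral safe arguments $\vec t$, since the safe arguments are altered by the nesting. Following the innermost reduction chain already used in the proof of Lemma \ref{lem_lh}, a single unfolding of $f( \vec m^y, \vec m^x; \vec t)$ produces an $h_w$-skeleton containing recursive calls $f( \vec m^v_1, \vec m^x; \cdot)$ and $f( \vec m^v_2, \vec m^x; \vec t)$ at $\lex^k$-predecessors $\vec m^v_1, \vec m^v_2 \lex^k \vec m^y$. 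Because the strategy is innermost, these recursive calls and the interleaved $\varphi_w$-evaluations are reduced to numerals one at a time, so at every stage the whole term is a fixed skeleton of polynomial size plus a single active subterm; I would bound that active subterm by the side induction hypothesis when it is a recursive $f$-call and by the main induction hypothesis when it is an $h_w$- or $\varphi_w$-application, using Lemma \ref{lem_lh} to keep the binary lengths of all threaded numerals, in particular the enlarged safe arguments produced by $\varphi_w$, bounded by $\mathrm{poly} ( \max | \vec m|) + \max | \vec t|$.

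The main obstacle is closing this side induction with a single pair of constants $c, d$ even though the safe arguments grow by one $\varphi_w$-layer at each recursive step. The two delicate choices are (a) formulating the hypothesis uniformly in the numeral safe arguments rather than only in the original $\vec n$, and (b) fixing $d$ large enough that, when the hypothesis bound $c ( \max | \vec m|^d +1)( \max | \vec t'| +1)$ for a recursive call is re-expanded via $\max | \vec t'| \leq \mathrm{poly} ( \max | \vec m|) + \max | \vec t|$, the resulting pure powers of $\max | \vec m|$ are reabsorbed into $c ( \max | \vec m|^d +1)( \max | \vec t| +1)$. Once these are fixed, every stage of the reduction is bounded by an expression of the shape $\mathrm{poly} ( \max | \vec m|) \cdot ( \max | \vec t| +1)$, and the maximum over the finitely many stages gives the product bound, completing both inductions.
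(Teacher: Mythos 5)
Your overall skeleton matches the paper's: main induction on $\lh (f)$, the same treatment of the base and $\sub$ cases, and, for $\snrn$, a side induction on $\sum (d, |\vec m^y|)$ whose hypothesis is stated for arbitrary numeral safe arguments. The gap is in the invariant you carry through that side induction. You keep the bound in the plain product form $c ( \max |\vec m|^d +1)( \max |\ul{t}| +1)$ and claim in your step (b) that, after substituting $|\ul{t'}| \leq \mathrm{poly} ( \max |\vec m|) + |\ul{t}|$ for the enlarged safe argument of the outer recursive call, the extra powers of $\max |\vec m|$ can be reabsorbed by taking $d$ large enough. This step fails: the same pair $c, d$ occurs in the side induction hypothesis and in the bound to be proved, so enlarging $d$ enlarges both sides; moreover your bound depends on the recursion arguments only through $\max |\vec m|^d$, which need \emph{not} decrease when passing to a $\lex^k$-predecessor (only the measure $\sum (d, |\cdot|)$ of Lemma \ref{lem_pred} does), so the invariant contains no decreasing quantity that can pay for the additive $\mathrm{poly} ( \max |\vec m|)$ increment incurred at each unfolding. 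Concretely, in the case
$\Sp (f( \vec m; \ul{t})) \leq \Sp (f( \vec m_1; \ul{f( \vec m_2; \ul{t})}))$,
your hypothesis gives the bound $c ( \max |\vec m_1|^d +1)(|\ul{f( \vec m_2; \ul{t})}| +1)$, and by Lemma \ref{lem_lh} one has $|\ul{f( \vec m_2; \ul{t})}| +1 \leq c' ( \max |\vec m|^{d'} +1) + |\ul{t}| +2$; the resulting product contains the term $c ( \max |\vec m|^d +1) \cdot c' ( \max |\vec m|^{d'} +1)$, which for small $|\ul{t}|$ (say $t = 0$) exceeds the target $c ( \max |\vec m|^d +1)(|\ul{t}| +1)$ no matter how $c$ and $d$ were fixed in advance. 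What actually happens is that each level of the side induction multiplies the polynomial by another polynomial, and since the descent has length up to $\sum (d, |\vec m|)$, the degree needed grows with $\max |\vec m|$ itself, destroying the uniformity in $c, d$.

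This is precisely why the paper strengthens the side induction hypothesis to the exponential form (\ref{sp_f}):
\begin{equation*}
 2^{\Sp (f( \vec m; \ul{t}))} \leq
 \bigl( 2^{c ( \sum (d, |\vec m|) +1)} + 2^{|\ul{t}| +1} \bigr)^{c ( \sum (d, |\vec m|) +1)},
\end{equation*}
an invariant that depends on the strictly decreasing measure $\sum (d, |\vec m|)$ rather than on $\max |\vec m|^d$. In this form the increment supplied by Lemma \ref{lem_lh} (inequality (\ref{lh(f_2)'})) costs only a doubling of the base, $2 \cdot 2^{c \sum (d, |\vec m|)} = 2^{c \sum (d, |\vec m|) +1} \leq 2^{c ( \sum (d, |\vec m|) +1)}$, and this cost is exactly paid for by the decrement $\sum (d, |\vec m_1|) +1 \leq \sum (d, |\vec m|)$ from Lemma \ref{lem_pred}; the product bound of the theorem is only read off at the very end. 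To repair your argument you must replace the product-form invariant by one of this kind (any form quadratic in the decreasing measure and additive in $|\ul{t}|$ would do); as stated, your side induction step does not close.
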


\begin{proof}
We prove the theorem again by induction on $\lh (f)$. 
If $\lh (f) = 1$, then, by (\ref{initialcase}), 
\begin{eqnarray}
\Sp (f ( \vec m; \vec n)) 
&\leq&
\lh (f) + \sum_{i=1}^k |m_i| + \sum_{i=1}^l |n_i|
\nonumber \\
&\leq&
( \lh (f) +k +l) ( \max |\vec m| +1) ( \max | \vec n| +1).
\label{lh=1}
\end{eqnarray}

The case  
$f = \sub [h, \vec g, \vec \varphi]$
is seen from IH, Lemma \ref{lem_sp}, and Lemma \ref{lem_lh}.

Finally, consider the case 
$f = \snrn [g, \{ h_w, \varphi_w : w \in \Sigma^k \}]$. 
For simplicity, we only consider $f \in \mathcal F^{k, 1}$ such that
$f( \vec 0; x) \rightarrow g(; x)$ and 
$f( \vec y; x) \rightarrow f( \vec v_1; f( \vec v_2; x))$
for some $ \vec v_1, \vec v_2 \lex^k \vec y$.
By IH, there exists a constant $c_g$ for $g$ 
enjoying the condition.
Furthermore, by Lemma \ref{lem_lh}, there exist constants 
$c', d'$ for $f$ enjoying the condition in the lemma.
Let $c_\lh := \lh (f) + k+1$.
Now put 
$c := \max \{ c_\lh, c_g, c'+1 \}$ and 
$d := \max \{ k, d' \}$.
Suppose that 
$m_1, \dots, m_k$ are numerals.
Then, by side induction on $\sum (d, |\vec m|)$,
we show that for any $t \in \mathcal{T(V)}$ and its arbitrary normal
 form $\ul{t}$,
\begin{equation}
 2^{\Sp (f( \vec m; \ul{t})
        )
   }
 \leq
 ( 2^{c ( \sum (d, |\vec m| ) +1)} +
   2^{ | \ul{t}| +1
     }
 )^{c ( \sum (d, |\vec m| ) +1)
   }.
\label{sp_f}
\end{equation}
This results in 
$ \Sp (f ( \vec m; \ul{t})) \leq
  c^2 ( \sum (d, |\vec m| ) +1
       )^2
 ( |\ul{t}| +1
 )
$.
\\
Case 1:
$\Sp ( f( \vec m; \ul{t})) \leq
 \lh (f) + \sum_{i=1}^k |m_i| + |\ul{t}|
$;
In this case, (\ref{sp_f}) follows from
\begin{eqnarray}
\Sp (f ( \vec m; \ul{t})) 
&\leq&
\lh (f) + k \max |\vec m| + |\ul{t}| 
\nonumber \\
&\leq&
c_\lh ( \max |\vec m| + |\ul{t}| +1) 
\nonumber \\
&\leq&
c ( \max |\vec m| +1) ( |\ul{t}| +1)
\quad \text{by} \ c_\lh \leq c.
\nonumber
\end{eqnarray}
Case 2:
$\Sp ( f( \vec m; \ul{t})) \leq
 \Sp (g( ;  \ul{t}))
$;
In this case, (\ref{sp_f}) follows from
\begin{eqnarray*}
\Sp (g( ; \ul{t})) 
&\leq&
c_g ( |\ul{t}| +1) 
\quad \text{by IH for} \ g, \\
&\leq&
c ( \max |\vec m|^d +1) ( |\ul{t}| +1) \quad \text{by} \
c_g \leq c.
\end{eqnarray*}
Case 3: Otherwise;
By Lemma \ref{lem_sp}, 
\begin{eqnarray}
\Sp (f( \vec m; \ul{t}))
&\leq&
\Sp (f( \vec m_1; \ul{f( \vec m_2; \ul{t})}))
\label{sp_f3}
\end{eqnarray}
for some $\vec m_1, \vec m_2 \lex^k \vec m$ and a normal form 
$\ul{f( \vec m_2; \ul{t})}$ of $f( \vec m_2; \ul{t})$.
Again, by Lemma \ref{lem_pred}, for each $j=1, 2$,
\begin{equation}
\sum (d, |\vec m_j|) < \sum (d, |\vec m|).
\label{sih'}
\end{equation}
Hence, by SIH for $\vec m_1$,
\begin{eqnarray}
2^{ \Sp (f( \vec m_1; \ul{f( \vec m_2; \ul{t})}))
  }
&\leq&
 ( 2^{c ( \sum (d, |\vec m_1| ) +1)} +
   2^{ | \ul{f( \vec m_2; \ul{t})}| +1
     }
 )^{c ( \sum (d, |\vec m_1| ) +1)
   }
\nonumber
\\
&\leq&
 ( 2^{c \sum (d, |\vec m| ) } +
   2^{ | \ul{f( \vec m_2; \ul{t})}| +1
     }
 )^{c ( \sum (d, |\vec m| ) +1 ) 
   }
\quad \text{by } (\ref{sih'}).
\label{Sp(f_1)}
\end{eqnarray}
On the other hand, by Lemma \ref{lem_lh}, 
\begin{eqnarray}
2^{|\ul{f( \vec m_2; \ul{t})}| +1} &\leq&
2^{c' ( \max |\vec m_2|^{d'} +1) +1} +
2^{|\ul{t}| +1}
\nonumber
\\
&\leq&
2^{c ( \max |\vec m_2|^d +1)} +
2^{|\ul{t}| +1}
\quad \text{by } c'+1 \leq c \text{ and } d' \leq d,
\nonumber
\\
&\leq&
2^{c \sum (d, |\vec m|) } +
2^{| \ul{t} | +1}
\quad \text{by } \max |\vec m_2|^d +1 \leq \sum (d, |\vec m|).
\label{lh(f_2)'} 
\end{eqnarray}
Combining (\ref{Sp(f_1)}) and (\ref{lh(f_2)'}), we obtain
\begin{eqnarray}
2^{ \Sp (f( \vec m_1; \ul{f( \vec m_2; \ul{t})}))
  }
&\leq&
 ( 2^{c \sum (d, |\vec m| ) } +
2^{c \sum (d, |\vec m|) } +
2^{| \ul{t} | +1}
 )^{c (\sum (d, |\vec m| ) +1) 
   }
\nonumber
\\
&=&
 ( 2^{c \sum (d, |\vec m| ) +1} +
2^{| \ul{t} | +1}
 )^{c ( \sum (d, |\vec m| ) +1) 
   }
\nonumber
\\
&\leq&
 ( 2^{c (\sum (d, |\vec m| ) +1)} +
2^{| \ul{t} | +1}
 )^{c ( \sum (d, |\vec m| ) +1) 
   }
\quad \text{by } 1 \leq c.
\label{Sp(f_1)'}
\end{eqnarray}
Now (\ref{sp_f}) follows from (\ref{sp_f3}) and (\ref{Sp(f_1)'}).
Above argument is slightly extended to the case for the general form of
 SNRN.
This completes the case for SNRN, and, therefore, the proof of the theorem.
\end{proof}

\begin{rem}
\label{rem_head_reduction}
\normalfont
As mentioned in Remark \ref{rem_snrn}, if a more general form
 (\ref{general_snrn}) of SNRN is taken instead,
the proof of Theorem \ref{thm_FsubsetFPS} does not work.
However, with the use of a stronger rewriting strategy, e.g. head
 reduction, a similar argument will work.
\end{rem}

\begin{col}
\label{thm_NsubsetFPS}
If $f \in \N_{normal}$, then the space required to compute 
$f( \vec x;) $ is bounded by 
$p ( |\vec x|)$ for some polynomial $p$, i.e.,
$ \N_{normal} \subseteq \fps$.
\end{col}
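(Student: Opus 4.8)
The plan is to derive the corollary directly from Theorem~\ref{thm_FsubsetFPS} by connecting the term-rewriting bound to an actual space-bounded computation. Given $f \in \N_{normal}$, there is a corresponding function symbol $F \in \mathcal F^{k,0}$ with $\pi(F) = f$, so that computing $f(\vec x;)$ amounts to reducing the ground term $F(\vec m;)$ to its numeral normal form, where $\vec m$ are the numerals representing $\vec x$. First I would invoke Theorem~\ref{thm_FsubsetFPS} with $l=0$: there exist constants $c, d$ (depending only on $F$) such that every term $t$ appearing in an innermost reduction starting from $F(\vec m;)$ satisfies $\lh(t) \leq c(\max|\vec m|^d + 1)$. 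Since there are no safe arguments, the factor $(\max|\vec n|+1)$ degenerates to a constant, and the bound is a genuine polynomial $p(|\vec x|)$ in the binary lengths of the inputs.

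Next I would describe the space-bounded algorithm that realizes $f$. The machine stores the current term of the innermost reduction sequence on its work tape and repeatedly locates an innermost redex, applies the matching rule from Definition~\ref{defS}, and overwrites the term, halting when a numeral (the normal form) is reached. The key observation is that, by the bound just established, every configuration of this computation is a term of length at most $p(|\vec x|)$; since the term is written over the finite signature $\mathcal F$ together with the fixed finite data determining each $\snrn$ and $\sub$ symbol, each such term occupies $O(p(|\vec x|))$ tape cells. Because innermost reduction always acts on an innermost redex whose arguments are already numerals, applying a single rule—successor, predecessor, deletion, cases, projection, composition unfolding, or one SNRN step—can be carried out using only workspace polynomial in the current term length, hence polynomial in $|\vec x|$. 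By Lemma~\ref{lem_termination} the process terminates, and the final numeral equals $f(\vec x;)$.

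The main obstacle, and the point deserving care, is that Theorem~\ref{thm_FsubsetFPS} bounds the \emph{length} of each individual term in the reduction but says nothing about the \emph{number} of reduction steps, which could a~priori be super-polynomial. This is not a problem for the space bound—$\fps$ imposes no restriction on running time, only on the number of tape cells used—so reusing the same bounded workspace across an arbitrarily long (but finite, by termination) sequence of rewrites is legitimate. I would make this explicit: the algorithm never needs to record the history of the reduction, only the current term, so the space cost is exactly the maximal term length encountered, which Theorem~\ref{thm_FsubsetFPS} bounds by $p(|\vec x|)$. One also has to confirm that the nondeterminism of $R_{\N}$ (the free choice of $\lex^k$-predecessors $\vec v_1, \vec v_2$ in rule~\ref{defS}.\ref{Ssnrn2}) is harmless: any fixed choice of $\lex^k$-functions $\f_1, \f_2$ picks out a deterministic reduction strategy computing precisely $\pi(F) = f$, and the polynomial bound holds uniformly over all such strategies, so the deterministic machine simply commits to one. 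Assembling these observations, the space used to compute $f(\vec x;)$ is bounded by $p(|\vec x|)$, giving $\N_{normal} \subseteq \fps$.
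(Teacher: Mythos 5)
Your overall strategy is sound, and it genuinely diverges from the paper's proof at the final, decisive step. The paper does not determinize the rewriting by hand: it observes that each rewrite step is polynomially computable, that \emph{some} normal form of $f(\vec m;)$ has the intended value $\pi(f(\vec m;))$, and concludes that a \emph{non-deterministic} Turing machine computes $f$ in polynomial space; it then invokes Savitch's Theorem ($\textbf{NPSPACE} \subseteq \textbf{PSPACE}$, giving $\textbf{FNPS} \subseteq \fps$) to obtain a deterministic algorithm. Your route---store only the current term, apply innermost rewriting with a fixed resolution of the nondeterminism, reuse the workspace across the (possibly very many) steps---eliminates the appeal to Savitch entirely, and the bound of Theorem~\ref{thm_FsubsetFPS} does apply uniformly to \emph{every} innermost reduction, so it covers whichever strategy the machine follows. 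Your explicit remark that the theorem bounds term lengths but not the number of steps, and that this is immaterial for a space bound, is exactly the right point to make; the paper handles the same issue implicitly.

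One claim, however, needs repair: it is not true that \emph{any} fixed choice of $\lex^k$-predecessors in rule~\ref{defS}.\ref{Ssnrn2} yields a reduction computing $\pi(F)=f$. The whole reason $R_{\N}$ is non-confluent is that different choices can produce genuinely different normal forms; they correspond to different functions of $\N$, defined by SNRN from the same data $g, h_w, \varphi_w$ but with different $\lex^k$-functions. So the machine cannot ``simply commit to one'' arbitrary strategy: it must commit to the strategy determined by the particular $\lex^k$-functions $\f_1, \f_2$ occurring in the definition of $f$ in $\N$ (one pair for each $\snrn$-subsymbol of $F$; this is a finite table that can be hard-coded into the machine), together with a routine induction showing that this particular strategy's normal form is the numeral denoting $f(\vec x)$. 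With that correction your argument goes through, and it is arguably more elementary and self-contained than the paper's, since it needs neither nondeterminism nor Savitch's Theorem.
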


\begin{proof}
Assume that $f \in \N_{normal} $ and $\vec m \in \mathbb N$.
Let us identify $f$ with the corresponding function symbol in 
$\mathcal F$ and $\vec m$ with the corresponding numerals in 
$\mathcal{T(F)}$.
Recall that $\Sp (m) = |m|$ for any numeral $m$.
This together with Theorem \ref{thm_FsubsetFPS} yields a polynomial $p$
 such that 
$\Sp (f ( \vec m; )) \leq p (| \vec m|)$.
From this, there exists an algorithm that rewrites an arbitrary ground term into its
 reduct running in polynomial time and hence in polynomial space.
And also, any normal form $\ul{f ( \vec m; )}$ of $f( \vec m; )$ is a numeral
and there is a normal form $\ul{f ( \vec m; )}$ such that 
$\pi (f( \vec m; )) = \pi ( \ul{f ( \vec m; )})$. 
Therefore, we can construct a non-deterministic Turing program which computes $f$ in polynomial
 space.
Recall here Savitch's Theorem which states 
$\textbf{NPSPACE} \subseteq \textbf{PSPACE}$. 
This yields $ \textbf{FNPS} \subseteq \fps$, and hence $f \in \fps$.
\end{proof}

\section{PSPACE-computable functions belong to $ \N$}
\label{sec_pspace}

In this section 
we show that every function from $\fps$ is a member of $\N_{normal}$.
The proof is divided into two steps.

First we simulate a computation of a $O(|x|^k)$-space-bounded Turing
machine by an $2^{O(|x|^l)}$-step-bounded register machine working over
the unary representation (for some $l$).
This is done by imitating an argument in 
Handley and Wainer \cite{handley}, in which 
every $O(|x|)$-space Turing computation is simulated by a 
$2^{O(|x|)} $($\approx O(x^k)$ for some $k$)-step-bounded register machine on unary notation. 

Then, we simulate an action of the exponentially bounded register
machine by functions in $\N_{normal}$. 
The argument is similar to one in Arai and the author
\cite{arai_eguchi}, in which an exponential-time Turing computation is
arithmetized with the use of SNRN.
We notice that the situation is, however, easier.
This is due to the fact that the class \textbf{PSPACE} is closed under
composition whereas \textbf{EXP} is not.

To simulate computations within $\N$, 
the closedness under a simultaneous SNRN is necessary:  

\begin{lem} \label{lem_simul}
{\normalfont (Cf. \cite{arai_eguchi} Theorem 3.1)}
Suppose that $f_1, \dots, f_l$ are defined from
$h_1, \dots, h_l \in \N^{k', l}$ and $\lex^k$-functions $\f_1, \f_2$ by a scheme of simultaneous SNRN such that
for each $i=1, \dots, l$ and $w \in \Sigma^k$,
\begin{equation*}
 \begin{cases}
 f_i ( \vec 0, \vec x; \vec a) = h_i ( \vec x; \vec a), 
 & \text{} \\
 f_i ( C_w ( \vec y;), \vec x; \vec a) =
 f_i ( \vec v_1, \vec x;
       f_1 ( \vec v_2, \vec x; \vec a), \dots, 
       f_l ( \vec v_2, \vec x; \vec a)
     ).
 & \text{}
 \end{cases}
\end{equation*}
Then, for any $g_1, \dots, g_l \in \N^{k',0}$ and for each 
$i=1, \dots, l$,
\[
 f_i ( \vec y, \vec x; g_1 ( \vec x; ), \dots,
                       g_l ( \vec x; )
     ) \in \N^{k+k', 0}.
\]
\end{lem}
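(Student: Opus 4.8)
The plan is to reduce the simultaneous scheme to a single instance of SNRN by \emph{tupling} the $l$ values into one safe argument. The decisive structural feature is that the defining equations share the same pair of $\lex^k$-functions $\f_1, \f_2$ across all $i$, so the recursion parameters $\vec v_1, \vec v_2 \lex^k C_w(\vec y;)$ are common to every $f_i$; this is exactly what lets a single recursion carry all $l$ components at once. First I would fix a safe $l$-ary tupling $J \in \N^{0,l}$ together with safe projections $U_1, \dots, U_l \in \N^{0,1}$ satisfying $U_i(; J(; a_1, \dots, a_l)) = a_i$, obtained from the pairing and unpairing functions. Providing such a tuple inside the restricted class $\N$, whose initial functions are only elementary, is the crux of the construction, and I expect it to be the main obstacle; everything else is bookkeeping with safe composition.

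Granting $J$ and the $U_i$, I would define a single function $F \in \N^{k+k',1}$ by SNRN, with base
\[
F(\vec 0, \vec x; b) = J(; h_1(\vec x; U_1(;b), \dots, U_l(;b)), \dots, h_l(\vec x; U_1(;b), \dots, U_l(;b))),
\]
which lies in $\N^{k',1}$ by safe composition, and with step functions $h_w, \varphi_w$ both taken to be the projection $I^{k+k',2}_{k+k'+2}$ onto the recursion term. Unwinding the SNRN scheme of Definition \ref{df_N} with these choices collapses the step equation to $F(C_w(\vec y;), \vec x; b) = F(\vec v_1, \vec x; F(\vec v_2, \vec x; b))$. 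I would then prove, by induction along the well-founded ordering $\lex^k$ on the recursion parameter (using $\vec v_1, \vec v_2 \lex^k C_w(\vec y;)$ so that the hypothesis applies to both inner calls), the invariant
\[
F(\vec y, \vec x; J(; a_1, \dots, a_l)) = J(; f_1(\vec y, \vec x; \vec a), \dots, f_l(\vec y, \vec x; \vec a)).
\]
The base case is immediate from $f_i(\vec 0, \vec x; \vec a) = h_i(\vec x; \vec a)$, and the step case chains the hypothesis at $\vec v_2$ and then at $\vec v_1$, matching the right-hand side of the simultaneous recursion componentwise.

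Finally I would recover each $f_i$ and discharge the safe arguments. From the invariant, $f_i(\vec y, \vec x; \vec a) = U_i(; F(\vec y, \vec x; J(; \vec a)))$. Substituting $g_j(\vec x;)$ for $a_j$, the key point is that $J(; g_1(\vec x;), \dots, g_l(\vec x;)) \in \N^{k',0}$ by safe composition, since each $g_j \in \N^{k',0}$ is admissible in a safe position of $J$; padding by projections places it in $\N^{k+k',0}$. Two further safe compositions, one feeding $\vec y, \vec x$ and this tuple into $F$ and one applying $U_i$, then yield
\[
f_i(\vec y, \vec x; g_1(\vec x;), \dots, g_l(\vec x;)) = U_i(; F(\vec y, \vec x; J(; g_1(\vec x;), \dots, g_l(\vec x;)))) \in \N^{k+k',0},
\]
as required. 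The only non-routine ingredient is the tupling machinery; it is the arity bookkeeping that converts the shared safe arguments into normal-only inputs via the $g_j$ that makes the resulting class $\N^{k+k',0}$ rather than $\N^{k+k',l}$.
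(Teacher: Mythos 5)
The skeleton of your reduction (a single SNRN function $F$ with pass-through step functions, so that $F(C_w(\vec y;),\vec x;b)=F(\vec v_1,\vec x;F(\vec v_2,\vec x;b))$, plus an invariant proved by induction along $\lex^k$) is the natural one, but the ingredient you ``grant'' --- a tupling $J\in\N^{0,l}$ with untuplings $U_i\in\N^{0,1}$ acting entirely on \emph{safe} arguments --- does not exist in $\N$, and this is not a surmountable obstacle but a provable impossibility. The paper says so explicitly: Bellantoni's safe-argument pairing is not definable in $\N$ because the concatenations $C_0,C_1$ act only on normal arguments; one cannot even double a safe argument (if $\mathrm{dbl}(;a)=2a$ were in $\N^{0,1}$, iterating it by SNRN with projection step functions would give $2^{2^{|x|}}\in\N$, contradicting Example \ref{exam_snrn}.6). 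More decisively, Lemma \ref{lem_lh} applied to a symbol with no normal arguments bounds $|J(;\vec a)|$ by $\max\{c,\max|\vec a|\}+1$ for a constant $c$, and for $l\geq 2$ there are far more tuples of a given length than numbers of that output length, so no injective $J$ with recovery functions $U_i$ can exist. Note also the red flag that your argument, if it went through, would prove the stronger statement $f_i\in\N^{k+k',l}$: from your invariant, $f_i(\vec y,\vec x;\vec a)=U_i(;F(\vec y,\vec x;J(;\vec a)))$ is a safe composition in arbitrary safe $\vec a$. The hypothesis that the safe inputs are $g_j(\vec x;)\in\N^{k',0}$, which you invoke only at the end and call arity bookkeeping, is exactly what makes the lemma provable, and the paper states it in this restricted form for that reason.

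What the paper actually does is place the coding machinery on the \emph{normal} side, where concatenation is available: $\pi(x,y_0,y_1;)=\times(x,y_0;y_1)=y_0\cdot 2^{|x|}+y_1\in\N^{3,0}$, with unpairings $\pi_0(x,y;)=\text{\d{--}}(x;y)$ and $\pi_1(x,y;)=y\minus\pi_0(x,y;)\cdot 2^{|x|}$, all arguments normal. This pairing is faithful only under the width condition $|y_1|\leq|x|$, so a second ingredient, absent from your proposal, is quantitative: Lemma \ref{lem_lh} supplies a polynomial $p$ with $\max_i|f_i(\vec y,\vec x;\vec g(\vec x;))|\leq p(|\vec x|,|\vec y|)$, and by Example \ref{exam_snrn}.\ref{exp3} the width parameter $2^{p(|\vec x|,|\vec y|)}$ lies in $\N_{normal}$, so $\pi(2^{p(|\vec x|,|\vec y|)},z_0,z_1;)$ codes every value arising in the recursion. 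Your dismissal of everything beyond the tupling as ``bookkeeping with safe composition'' misses that a width-bounded, normal-argument pairing is the only kind $\N$ admits, that using it forces the appeal to the growth bound of Lemma \ref{lem_lh}, and that it is applicable at all only because the lemma concerns $f_i(\vec y,\vec x;g_1(\vec x;),\dots,g_l(\vec x;))$ rather than $f_i$ with free safe arguments.
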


\begin{proof}
Although the usual simultaneous recursion is easily
 reduced to a single recursion by a pairing and unpairing
 functions,
it is not a trivial task in the safe representation. 
Bellantoni \cite{Bel92} defines a pairing function in the Bellantoni-Cook
 class $\mathcal B$.
This pairing function is not definable in $\N$, since the
 i-concatenation $C_i$ is available only on the normal argument.
A familiar pairing function
$\langle x, y \rangle = \frac{1}{2} (x+y) (x+y+1) +x$  
is definable in $\N_{normal}$, while it is not clear
 for the author whether unpairing functions for
$\langle \cdot , \cdot \rangle$ can be defined in $\N_{normal}$.
Therefore, we instead employ the following paring function.

For the function $\times$ in Example \ref{exam_snrn}.\ref{times},
$\times (x,y ; a) = y \cdot 2^{|x|} + a =
 y \underbrace{0 \dots 0}_{|x| - |a|} a
$
if $|a| \leq |x|$.
Hence we define a pairing function $\pi \in \N^{3,0}$ by
\[
 \pi (x, y_0, y_1;) = \times (x, y_0; y_1),
\]
which works as a pairing function if $|y_1| \leq |x|$.
For the function $\text{\d{--}}$ in Example \ref{exam_snrn}.\ref{-}, 
the corresponding unpairing function $\pi_0$ is defined by
\[
 \pi_0 (x, y;) = \text{\d{--}} (x; y),
\]
which works as 
$\pi_0 (x, \pi (x, y_0, y_1;); ) = y_0$.
We mention that the deletion function $D$ is needed only to define
$\pi_0$.
And another one $\pi_1$ is defined by
\[
 \pi_1 (x, y;) = y \minus \pi_0 (x, y; ) \cdot 2^{|x|}.
\]
which works as 
$\pi_1 (x, \pi (x, y_0, y_1;); ) = 
 \pi (x, y_0, y_1;) \minus y_0 \cdot  2^{|x|} = y_1
$.

Let $p ( \vec x, \vec y)$ be a polynomial such that
\[
 \max \{ |f_i ( \vec y, \vec x; \vec g ( \vec x;))|:
         i=1, \dots, l 
      \}
 \leq p(| \vec x|, | \vec y|).
\]
The existence of the polynomial $p ( \vec x, \vec y)$ is guaranteed by
 Lemma \ref{lem_lh}.
Then, $\pi ( 2^{p (| \vec x|, | \vec y|)}, z_0, z_1; )$ works as a
 pairing function for 
$f_i ( \vec y, \vec x; \vec g ( \vec x;)) \ (i=1, \dots, l)$,
and $\pi_i ( 2^{p (| \vec x|, | \vec y|)}, z;) $ 
$(i=0, 1)$ as the corresponding unpairing functions.
Recall Example \ref{exam_snrn}.\ref{exp3} here.
Since $2^{p (|\vec x|, | \vec y|)}$ is defined in $\N_{normal}$,
we thus conclude
$ f_i ( \vec y, \vec x; \vec g ( \vec x; )
     ) \in \N^{k+k', 0}
$
for each $i=1, \dots, l$.
\end{proof}

\begin{thm} 
{\normalfont (Cf. \cite{arai_eguchi} Theorem 4.1)}
 $ \fps \subseteq \N_{normal}$.
\end{thm}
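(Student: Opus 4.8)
The plan is to carry out the two-step strategy announced at the start of the section. Take $f \in \fps$, computed by a deterministic Turing machine $M$ whose work space on input $\vec x$ is bounded by $p(|\vec x|)$ for a fixed polynomial $p$ of degree $k$. First I would reduce $M$ to a register machine $R$ working over the unary representation whose running time is bounded by $2^{O(|\vec x|^{l})}$ for a suitable $l$, following the construction of Handley and Wainer \cite{handley}. Then I would arithmetize the computation of $R$ inside $\N_{normal}$ using SNRN, exactly as in the exponential-time simulation of \cite{arai_eguchi} but with the simplification afforded by closure under composition.

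For the first step, I would use that a configuration of $M$ on input $\vec x$ (state, head positions, and the contents of the $p(|\vec x|)$ work cells) is coded by a number below $2^{O(|\vec x|^{k})}$, so that $M$ has at most $2^{O(|\vec x|^{k})}$ distinct configurations and, being deterministic and halting on every input, terminates within that many steps. The register machine $R$ stores each component of a configuration in a register as a unary string and realizes one transition of $M$ with a constant number of register instructions; hence $R$ halts within $2^{O(|\vec x|^{l})}$ steps, and every register value stays below $2^{O(|\vec x|^{l})}$, i.e. of binary length $O(|\vec x|^{l})$.

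For the second step, I would define, by SNRN, a function returning the configuration of $R$ after a given number of steps, one unfolding of the recursion realizing one instruction of $R$. The essential observation is that an SNRN over a tuple of numeral arguments built from $\vec x$ can be made to unfold $2^{q(|\vec x|)}$ times for any desired polynomial $q$: this is precisely the phenomenon exhibited in Example \ref{exam_snrn}.\ref{exp} and Example \ref{exam_snrn}.\ref{exp3}, where the $\lex$-descent on several arguments threads a safe value through exponentially many applications. Choosing $q$ so that $2^{q(|\vec x|)}$ dominates the step bound $2^{O(|\vec x|^{l})}$ of $R$, the recursion runs long enough to reach the halting configuration. A single step of $R$---incrementing, decrementing, or zero-testing a register holding a unary value---is expressed by safe composition from $S$, $P$, $C$, $C_i$ and $D$ on the corresponding numeral. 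Since $R$ has finitely many registers updated simultaneously, I would package them by the simultaneous SNRN of Lemma \ref{lem_simul} and read off the output of $f$ from the final configuration by the unpairing functions.

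The hard part will be the coding. The simultaneous recursion of Lemma \ref{lem_simul} requires, before the recursion is set up, a polynomial length bound on all register values so that the pairing function $\pi$, which behaves correctly only when the packed value fits below the modulus, can be applied; this bound is supplied by Lemma \ref{lem_lh}, and the needed modulus $2^{p(|\vec x|, |\vec y|)}$ is itself in $\N_{normal}$ by Example \ref{exam_snrn}.\ref{exp3}. The remaining delicate point is to match the $\lex^{k}$-descent count of the chosen recursion tuple to the step count of $R$, so that the unfolding is neither too short to finish the computation nor so structured that the per-step update cannot be threaded through the safe argument; once the pairing and the step count are aligned, the surviving verifications are routine.
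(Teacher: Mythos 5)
Your overall architecture is the paper's own: reduce the polynomial-space Turing machine to a register machine over unary notation in the style of Handley and Wainer \cite{handley}, bound its running time by $2^{q(|\vec x|)}$ via configuration counting, and then simulate it inside $\N_{normal}$ by a simultaneous SNRN whose nesting threads the one-step update through exponentially many applications, invoking Lemma \ref{lem_simul} for the reduction of simultaneous recursion to single recursion and Example \ref{exam_snrn}.\ref{exp3} for the exponential unfolding count. The paper organizes the second step slightly differently---it first writes the one-step simulation as a plain simultaneous safe recursion on notation, then uses a single-argument simultaneous SNRN so that $|y|$ bits of recursion yield $2^{|y|}$ machine steps, and finally substitutes the numeral $2^{q(|\vec x|)}$ by safe composition---whereas you descend directly on a tuple built from $\vec x$; this is an inessential variant.

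There is, however, one concrete point where your sketch is wrong as stated, and where the two halves of your argument pull against each other. You claim that $R$ ``realizes one transition of $M$ with a constant number of register instructions,'' and later that a single step of $R$ is expressed by safe composition from $S$, $P$, $C$, $C_i$ and $D$. A transition of $M$ moves the head, i.e., replaces the code $r$ of one tape half by $2r+i$ or $\lfloor r/2 \rfloor$. A machine whose primitive steps are unary (successor, predecessor, test) needs a number of steps of order $r$, hence exponential in a polynomial of $|\vec x|$, to do this---not $O(1)$---while a machine that has doubling as a primitive step cannot have its steps simulated inside the recursion, because the register contents occupy \emph{safe} positions there and $C_i \in \N^{1,0}$ acts only on normal arguments. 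This constraint is precisely why the unary detour is taken: the paper's step functions $\delta_j$ are built from $O$, $S$, $P$, $I$, $C$ alone, with $D$ needed only for the unpairing function in Lemma \ref{lem_simul}. The repair is the paper's explicit remark that $x \mapsto 2x$, $x \mapsto 2x+1$ and $x \mapsto \lfloor x/2 \rfloor$ are computable by a unary register machine within a number of steps bounded by the exponential of a polynomial in $|x|$, so the total step count of $R$ is still $2^{q(|\vec x|)}$ for some polynomial $q$. With that correction, and with $C_i$ struck from your list of per-step operations, your argument goes through and coincides with the paper's proof.
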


\begin{proof}
Assume that $f \in \fps$ and the arity of $f$ is $k$.
Let $\vec x = x_1, \dots, x_k$ be inputs.
Then the number of possible configurations is bounded by
$2^{p (|\vec x|)}$ for some polynomial $p ( \vec x)$.
Hence $f ( \vec x)$ is computed within a number of steps bounded by
$2^{p (|\vec x|)}$, since the same configuration does not repeat in any
 terminating computations.
As in the proof of Theorem 3.9 in \cite{handley},
actions of the Turing machine can be simulated by a register machine 
working over the unary representation as follows.
\begin{itemize}
\def\labelitemi{--}
\item One register contains the number representing the
      tape-configuration to the left of the reading head, and
\item another register contains the number representing the
      tape-configuration to the right of the reading head.
\end{itemize}
We consider the following (unlimited) register machines.
A register machine $\max |\vec m|$ has registers $R_0, R_1, \dots $ which store natural
 numbers $r_0, r_1, \dots$. 
A register machine program is a finite list 
$\{ I_j: j=1, \dots, j_M\}$ of instructions.
\begin{itemize}
\item Each instruction has one of four basic types:
      (Zero) $Z(k)$; $r_k = 0$, (Successor) $S(k)$; $r_k = r_k +1$,
      (Predecessor) $P(k)$; $r_k = r_k \dot- 1$,
      (Transfer) $T(k, l)$; $r_k = r_l$,
      (Jump) $J(k, l, j)$; if $r_k = r_l$ then go to instruction $I_j$
      else go to next instruction.
\item When a computation starts, the inputs $x_1, \dots, x_k$ are stored
      in registers $R_1, \dots, R_k$, respectively, and $0$ in all the
      other registers.
\item When the computation halts, the output is the number in register $R_0$.
\end{itemize}
Functions like
$x \mapsto 2x$, $x \mapsto 2x +1$ and 
$x \mapsto \lfloor x/2 \rfloor$
are computed by a register machine within a number of steps bounded
 by the exponential of a polynomial in $|x|$.
Therefore, $f ( \vec x)$ is computed by a register machine $M$ within 
$2^{q (|\vec x|)}$-steps for some polynomial $q ( \vec x)$.

Now we simulate this computation in $\N$.
For a constant $l \geq 3+k$ depending on $M$, 
information on $M$ in step $|y|$ of the computation on inputs
$\vec x$ is represented by some functions
$f_j' (y, \vec x;)$
$(j=1, \dots, l)$, which are defined by
simultaneous safe recursion on notation of the form
\begin{equation*}
 \begin{cases}
 f'_j (0, \vec x;) = g_j ( \vec x;),
 & \text{} \\
 f'_j (C_i (y;), \vec x;) = h_j (; f_1 (y, \vec x;), \dots, f_l (y, \vec x;))
 & \text{} (i=0, 1)
 \end{cases}
\end{equation*}
for each $j=1, \dots, l$.
For the sake of completeness, we give an outline of this arithmetization.
Let us assume a (finite) encoding $\lceil I \rceil$ for an instruction $I$.
We can define the following functions in $\N_{normal}$:
\begin{eqnarray*}
index(y, \vec x;) &=& \mbox{the index of the instruction performed next}, \\
inst(y, \vec x;) &=& \mbox{a code } \lceil I \rceil 
                    \mbox{ of the instruction }
                    I \mbox{ performed next}, \\
r_j (y, \vec x;) &=& \mbox{the number } r_j \mbox{ stored in the register }
 R_j \mbox{ in step } |y|. \\ 
&& (j=0, \dots, l-3)
\end{eqnarray*}
Namely, 
$f'_1 = index$, $f'_2 = inst$, and $f'_j = r_{j-3}$ for each 
$j=3, \dots, l$.
For the base case of recursion,
$index(0, \vec x;) = 1$, 
$inst(0, \vec x;) = \lceil I_1 \rceil$, and
\begin{equation*}
r_j (0, \vec x;) =
  \begin{cases}
  0 & \text{if $j=0$,} \\
  x_j & \text{if $1 \leq j \leq k$,} \\
  0 & \text{otherwise.}
  \end{cases}
\end{equation*}
The recursion step is of the form
\begin{eqnarray*}
index(C_i (y;), \vec x;) &=& \delta_1 (; index(y, \vec x;), inst(y, \vec x;)),
 \\
inst(C_i (y;), \vec x;) &=& \delta_2 (; index(y, \vec x;), inst(y, \vec x;)),
 \\
r_j (C_i (y;), \vec x;) &=& \delta_{3+j} (; inst(y, \vec x;), 
                                      r_0 (y, \vec x;), 
                                      r_1 (y, \vec x;), \dots, 
                                      r_l (y, \vec x;)),
\end{eqnarray*}
where $\delta_1, \dots, \delta_l$ are determined according to the
 program, which depend also on the encoding
$\lceil \cdot \rceil$ for the instructions.
Obviously, each $\delta_j$ is defined only on safe arguments from $O$, $S$, $P$, $I$, $C$ by a
 suitable number of application of safe composition.
For the same reason, the above recursion step does not depend on 
$i =0, 1$ of $C_i (y;)$.

By the output convention, 
$f_3 (t, \vec x;) = r_0 (t, \vec x;) = f( \vec x)$ if
$|t| \geq 2^{q (| \vec x|)}$.
Next, via the functions $h_1, \dots, h_l$, 
we define functions $f_j (y, \vec x; \vec a) \ (j=1, \dots, l)$
by the following equations of simultaneous SNRN:
\begin{equation*}
 \begin{cases}
 f_j (0 ; \vec a) = h_j (; \vec a),
 & \text{} \\
 f_j (C_i (y;); \vec a) =
 f_j (y; f_1 (y; \vec a), \dots, f_l (y; \vec a))
 & \text{$(i= 0, 1)$} \\
 \end{cases}
\end{equation*}
Lemma \ref{lem_simul} ensures that
$F_j (y, \vec x; ) := f_j (y; g_1 ( \vec x; ), \dots, g_l ( \vec x; ))
 \in \N_{normal}
$
for each $j = 1, \dots, l$.
By the definition, the functions
$F_j (y, \vec x;) 
$
$(j=1, \dots, l)$ represent information on $M$ in step $2^{|y|}$.
Therefore,
$F_3 ( t, \vec x;) = r_0 ( 2^t, \vec x;) = f( \vec x)$
whenever $t \geq 2^{q (| \vec x|)}$.
Thus an application of safe composition yields that
$f( \vec x) = 
 F_3 ( 2^{q (| \vec x|)}, \vec x;) \in \N_{normal}
$.
\end{proof}

We mention that the formation of $F_3 ( 2^{q (| \vec x|)}, \vec x;)$ is
 now easy
 contrary to the construction in the proof of Theorem 4.1 in
 \cite{arai_eguchi}. 
This is due to the formulation of safe composition.
A more restrictive composition of the form
$f( \vec x; \vec a) = h ( \vec I^{k,0}_j ( \vec x;); 
                          \vec \varphi ( \vec x; \vec a)
  )
$ 
for projection functions $I^{k,0}_{j_1}, \dots, I^{k,0}_{j_k}$
is necessary in \cite{arai_eguchi}.

\section*{Conclusions and final comments}
In this paper we give a term-rewriting characterization of the polyspace
functions.
As a consequence, a recursion-theoretic characterization of the
polyspace functions is obtained.
This suggests a uniform approach to the space complexity in the sense
that
adding specific safe recursion schemes to a suitable base class capture
various space complexity classes (cf. Remark \ref{rem_uniform}):
\[
 \dgARROWLENGTH=5em
 \begin{diagram}
\node[3]{\mathrm{EXPSPACE}}\\
\node{\mathrm{Base \ class}}\arrow[1]{ene,t,3}{\mbox{SNR}}
\arrow[2]{e,t,3}{\mbox{SNRN}}
\arrow[1]{ese,t,3}{\mbox{SR}}
\node[2]{\mathrm{PSPACE}}\\
\node[3]{\mathrm{LINSPACE}}
\end{diagram}
\]

We introduce a two-sorted function class $\N^{k, l}$ such that
$\N_{normal} = \fps$.
The present work is inspired by Oitavem's works, which also give a
term-rewriting and recursion-theoretic characterization of $\fps$.
However, it is not clear for the author how to prove the equality of $\N$
and the Oitavem class.
Nor proper inclusion relation between $\N$ and $\mathcal N$ in
\cite{arai_eguchi} which captures the exptime functions.
For example, it seems quite difficult to simulate safe nested recursion
within Oitavem's formulation as well as the converse is.
These are future works.

The research in such a framework has been initiated by Cichon and
Weiermann \cite{cich_weier}, and followed by Beckmann and Weiermann \cite{bec_weier}.
The underlying idea in \cite{cich_weier} and \cite{bec_weier} is that
term rewriting can be used to prove inclusion relations between complexity classes.
In particular, non-trivial closure properties are obtained.
An application in this direction will be to prove Savitch's Theorem in the
present context (as suggested by the referee).

Clearly, the present result is related to a work \cite{marion01} by Bonfante,
Marion and Moyen.
In \cite{marion01} it is shown that if a program is polynomially
quasi-interpretable and its termination is proved via the lexicographic
path order LPO, then the program is polyspace-computable.
This together with the proof of Lemma \ref{lem_termination} suggests that the TRS
$R_{\N}$ can be polynomially quasi-interpreted
(assuming a suitable rewriting strategy).
  
\section*{Acknowledgments}
First of all, I would like to acknowledge Georg Moser.
He called my attention to Oitavem's works \cite{oitavem97, oitavem02}
and a related work \cite{avanzini} by M. Avanzini.
He also pointed out some errors in an earlier draft.
I would like to thank the referee for careful reading and invaluable
comments.
In particular, the introduction of the non-confluent TRS $R_\N$ and the use
of the innermost strategy are due to him or her.
I would also like to thank my thesis advisor Prof. Toshiyasu Arai for
his comments on this work.



\end{document}